

\documentclass[preprint,9pt]{elsarticle}




\usepackage{amssymb}
\usepackage{amsmath}
\usepackage{amsthm}
\usepackage{enumerate}
\newtheorem{thrm}{Theorem}[section]
\newtheorem{lem}[thrm]{Lemma}

\newtheorem{exam}[thrm]{Example}
\newtheorem{cor}[thrm]{Corollary}

\theoremstyle{definition}
\newtheorem{definition}[thrm]{Definition}

\newtheorem{conclusion}[thrm]{Conclusion}

\journal{}

\begin{document}

\begin{frontmatter}


\cortext[cor1]{Corresponding author (+903562521616-3087)}

\title{$k$ sequences of Generalized Van der Laan and Generalized Perrin Polynomials}


\author[rvt]{Kenan Kaygisiz\corref{cor1}}
\ead{kenan.kaygisiz@gop.edu.tr}
\author[rvt]{Adem \c{S}ahin}
\ead{adem.sahin@gop.edu.tr}
\address[rvt]{Department of Mathematics, Faculty of Arts and Sciences,
Gaziosmanpa\c{s}a University, 60250 Tokat, Turkey}

\begin{abstract}
In this paper, we present $k$ sequences of Generalized Van der Laan
Polynomials and Generalized Perrin Polynomials using Genaralized
Fibonacci and Lucas Polynomials. We give some properties of these
polynomials. We also obtain generalized order-$k$ Van der Laan
Numbers, $k$
sequences of generalized order-$k$ Van der Laan Numbers, generalized order-$%
k $ Perrin Numbers and $k$ sequences of generalized order-$k$ Perrin
Numbers. In addition, we examine the relationship between them.
\end{abstract}
\begin{keyword}
Padovan Numbers, Cordonnier Numbers, Generalized Van der Laan
Polynomials, Generalized Perrin Polynomials, $k$ sequences of the
generalized Van der Laan and Perrin Polynomials.
\end{keyword}

\end{frontmatter}


\section{Introduction}
Fibonacci, Lucas, Pell and Perrin numbers are known for a long time.
There are a lot of studies, relations, and applications of them.
Generalization of this numbers has been studied by many researchers.

Miles [11] defined generalized order-$k$ Fibonacci numbers(GO$k$F)
in 1960.
Er [1] defined $k$ sequences of generalized order-$k$ Fibonacci Numbers($k$SO$k$%
F) and gave matrix representation for this sequences in 1984. Kalman
[2] obtained a Binet formula for these sequences in 1982. Karaduman
[3], Ta\c{s}\c{c}\i\ and K\i l\i \c{c} [13] studied on these
sequences. K\i l\i \c{c} and Ta\c{s}\c{c}\i\ [7] defined $k$
sequences of generalized order-$k$ Pell Numbers($k$SO$k$P) and
obtained sums properties by using matrix method. Kaygisiz and
Bozkurt [5] studied on generalization of Perrin numbers. Yýlmaz and
Bozkurt [14] give some properties of Perrin and Pell numbers.

Meanwhile, MacHenry [8] defined generalized Fibonacci polynomials
$(F_{k,n}(t))$, Lucas polynomials $(G_{k,n}(t))$ in 1999, studied on
these polynomials in [9] and defined matrices $A_{(k)}^{\infty } $
and $D_{(k)}^{\infty }$ in [10]. These studies of MacHenry include
most of other studies mentioned above. For example, $A_{(k)}^{\infty
}$ is reduced to $k$ sequences of generalized order-$k$ Fibonacci
Numbers and $A_{(k)}^{\infty }$
is reduced to $k$ sequences of generalized order-$k$ Pell Numbers when $t_{1}=2$ and $%
t_{i}=1$ (for $2\leq i\leq k$). In addition Binet formulas for
($k$SO$k$F) and ($k$SO$k$P) can be obtained by using equation
(\ref{adem}). This analogy shows the importance of the matrices $A_{(k)}^{\infty }$ and $%
D_{(k)}^{\infty }$ and Generalized Fibonacci and Lucas polynomials.
Based on this idea Kaygisiz and \c{S}ahin defined $k$
sequences of generalized order-$k$ Lucas Numbers using $G_{k,n}(t)$ and $%
D_{(k)}^{\infty }$ in[4]$.$

\bigskip In this article, we first present $k$ sequences of Generalized Van
der Laan and Perrin Polynomials($V_{k,n}^{i}(t)$ and
$R_{k,n}^{i}(t)$) using
Genaralized Fibonacci and Lucas Polynomials and obtain generalized order-$%
k$ Van der Laan and Perrin Numbers, $k$ sequences of generalized
order-$k$ Van der Laan and Perrin Numbers by the help of these
polynomials and matrices $A_{(k)}^{\infty }$ and $D_{(k)}^{\infty
}$. In addition, we examine the relationship between them and
explore some of the properties of these sequences. We believe that
our result are important, especially, for those who are interested
in well known Fibonacci, Lucas, Pell and Perrin sequences and their
generalization.

\bigskip

MacHenry [8] defined generalized Fibonacci polynomials
$(F_{k,n}(t))$, Lucas polynomials $(G_{k,n}(t))$ and obtained
important relations between generalized Fibonacci and Lucas
polynomials in [9], where $t_{i}$ $(1\leq i\leq k)$
are constant coefficients of the core polynomial%
\begin{equation}
P(x;t_{1},t_{2},\ldots ,t_{k})=x^{k}-t_{1}x^{k-1}-\cdots -t_{k},
\label{core}
\end{equation}%
which is denoted by the vector
\begin{equation}
t=(t_{1},t_{2},\ldots ,t_{k}). \label{vkt}
\end{equation}%
$F_{k,n}(t)$ is defined inductively by
\begin{eqnarray*}
F_{k,n}(t) &=&0,\text{ }n<0 \\
F_{k,0}(t) &=&1 \\
F_{k,1}(t) &=&t_{1} \\
F_{k,n+1}(t) &=&t_{1}F_{k,n}(t)+\cdots +t_{k}F_{k,n-k+1}(t).
\end{eqnarray*}%

$G_{k,n}(t_{1},t_{2},\ldots ,t_{k})$ is defined by%
\begin{eqnarray*}
G_{k,n}(t) &=&0,\text{ }n<0 \\
G_{k,0}(t) &=&\text{ }k \\
G_{k,1}(t) &=&t_{1}\text{ } \\
G_{k,n+1}(t) &=&t_{1}G_{k,n}(t)+\cdots +t_{k}G_{k,n-k+1}(t).
\end{eqnarray*}

In [10], matrices $A_{(k)}^{\infty }$ and $D_{(k)}^{\infty }$ are
defined by using the following matrix,

\begin{equation}
A_{(k)}=\left[
\begin{array}{ccccc}
0 & 1 & 0 & \ldots & 0 \\
0 & 0 & 1 & \ldots & 0 \\
\vdots & \vdots & \vdots & \ddots & \vdots \\
0 & 0 & 0 & \ldots & 1 \\
t_{k} & t_{k-1} & t_{k-2} & \ldots & t_{1}%
\end{array}%
\right] \text{ .}  \label{ak}
\end{equation}

$A_{(k)}^{\infty }$ is obtained by multiplying $A_{(k)}$ and
$A_{(k)}^{-1}$ by the vector $t$ in (\ref{vkt}). For $k=3,$ $A_{(k)}^{\infty }$ looks like this%
\begin{equation*}
A_{(3)}^{\infty }=\left[
\begin{array}{ccc}
\cdots & \cdots & \cdots \\
S_{(-n,1^{2})} & -S_{(-n,1)} & S_{(-n)} \\
\cdots & \cdots & \cdots \\
S_{(-3,1^{2})} & -S_{(-3,1)} & S_{(-3)} \\
1 & 0 & 0 \\
0 & 1 & 0 \\
0 & 0 & 1 \\
t_{3} & t_{2} & t_{1} \\
\cdots & \cdots & \cdots \\
S_{(n-1,1^{2})} & -S_{(n-1,1)} & S_{(n-1)} \\
S_{(n,1^{2})} & -S_{(n,1)} & S_{(n)} \\
\cdots & \cdots & \cdots%
\end{array}%
\right] \bigskip \ \ \ \
\end{equation*}%
and

\ \
\begin{equation}
A_{(k)}^{n}=\left[
\begin{array}{ccccc}
(-1)^{k-1}S_{(n-k+1,1^{k-1})} & \cdots &
(-1)^{k-j}S_{(n-k+1,1^{k-j})} &
\cdots & S_{(n-k+1)} \\
\cdots & \cdots & \cdots & \cdots & \cdots \\
(-1)^{k-1}S_{(n,1^{k-1})} & \cdots & (-1)^{k-j}S_{(n,1^{k-j})} &
\cdots &
S_{(n)}%
\end{array}%
\right] \ \ \   \label{gm}
\end{equation}%
where%
\begin{equation}
\ \ S_{(n-r,1^{r})}=(-1)^{r}\sum\limits_{j=r+1}^{n}t_{j}S_{(n-j)},\
0\leq r\leq n.
\end{equation}

Derivative of the core polynomial (\ref{core}) is
\begin{equation*}
P%
{\acute{}}%
(x)=kx^{k-1}-t_{1}(k-1)x^{k-2}-\cdots -t_{k-1},
\end{equation*}%
which is represented by the vector%
\begin{equation}
(-t_{k-1},\ldots ,-t_{1}(k-1),k). \label{dkv}
\end{equation}%
Multiplying $A_{(k)}$ and $A_{(k)}^{-1}$ by the vector (\ref{dkv})
gives the matrix $D_{(k)}^{\infty }.$

Right hand column of $A_{(k)}^{\infty }$ contains sequence of the
generalized Fibonacci polynomials $F_{k,n}(t)$. In addition, the right hand column of $%
D_{(k)}^{\infty }$ contains sequence of the generalized Lucas polynomials $%
G_{k,n}(t)$.

\bigskip

For easier reference, we have state some theorems which will be used
in the subsequent section. We also give some sequences mentioned
above.

\begin{thrm}
$[$9$]$ $F_{k,n}(t)$ and $G_{k,n}(t)$ are Generalized Fibonacci and
Lucas
polynomials respectively,%
\begin{equation*}
\sum\limits_{j=1}^{k}\frac{\partial G_{k,n}(t)}{\partial t_{j}}%
t_{j}=n\bigskip F_{k,n+1}(t)
\end{equation*}%
and%
\begin{equation*}
n\bigskip
F_{k,n+1}(t)=\sum\limits_{r=1}^{k}G_{k,r}(t)F_{k,n-r+1}(t).
\end{equation*}
\ \ \ \ \ \ \ \ \ \ \ \ \ \ \ \ \ \ \ \ \ \ \ \ \ \ \ \ \ \ \ \ \ \
\ \ \ \ \ \ \ \ \ \ \ \ \ \ \ \ \ \ \ \ \ \ \
\end{thrm}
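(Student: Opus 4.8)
The plan is to pass from the defining recurrences to the symmetric-function meaning of the two families. Writing the core polynomial (\ref{core}) as $P(x)=\prod_{i=1}^{k}(x-\alpha_i)$ and setting $Q(x)=1-t_1x-\cdots-t_kx^k=\prod_{i=1}^k(1-\alpha_i x)$, I would first record the two generating functions $\sum_{n\ge0}F_{k,n}(t)x^n=1/Q(x)$ and $\sum_{n\ge1}G_{k,n}(t)x^n=-xQ'(x)/Q(x)$; each follows by checking that the right-hand side satisfies the stated recurrence with the prescribed initial data. This identifies $F_{k,n}$ with the complete homogeneous symmetric polynomial $h_n(\alpha)$ and $G_{k,n}$ with the power sum $p_n(\alpha)=\sum_i\alpha_i^{\,n}$, which is the dictionary that makes both identities transparent.

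For the convolution identity I would use logarithmic differentiation. From $\log(1/Q)=-\log Q$ one gets $xH'(x)/H(x)=-xQ'(x)/Q(x)$, where $H=1/Q$; hence $xH'(x)=\bigl(\sum_{r\ge1}G_{k,r}(t)x^r\bigr)H(x)$. Comparing the coefficient of $x^n$ on the two sides turns the product of series into exactly the advertised sum $\sum_r G_{k,r}(t)F_{k,n-r+1}(t)$ and the left side into $n$ times the appropriate generalized Fibonacci polynomial. The bookkeeping of the summation range and the index shift is where one must be careful, since the natural cut-off is governed by the vanishing $F_{k,m}(t)=0$ for $m<0$ and by the recurrence truncating the power sums at length $k$.

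The derivative identity I would obtain from a single clean lemma: $\partial G_{k,n}(t)/\partial t_j=n\,F_{k,n-j}(t)$ (up to the same index shift). To prove it, differentiate $P(\alpha_i)=0$ implicitly to get $\partial\alpha_i/\partial t_j=\alpha_i^{\,k-j}/P'(\alpha_i)$, so that $\partial p_n/\partial t_j=n\sum_i\alpha_i^{\,n+k-j-1}/P'(\alpha_i)$, and then invoke the Lagrange/residue formula $\sum_i\alpha_i^{\,m}/P'(\alpha_i)=h_{m-k+1}(\alpha)$. Summing this lemma against $t_j$ and collapsing $\sum_{j=1}^k t_j F_{k,n-j}(t)$ by the defining recurrence of $F_{k,n}$ produces a single generalized Fibonacci polynomial, which is the right-hand side; this simultaneously shows that the two expressions in the theorem agree.

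The main obstacle is the residue formula $\sum_i\alpha_i^{\,m}/P'(\alpha_i)=h_{m-k+1}(\alpha)$ together with the careful tracking of initial values and index ranges. The residue identity must be justified for all $m\ge0$ (including the boundary cases $0\le m\le k-2$, where the sum vanishes, and $m=k-1$, where it equals $1$), and one must reconcile the low-index values $G_{k,0},\dots,G_{k,k-1}$ of the Lucas polynomials with the power sums $p_0,\dots,p_{k-1}$ rather than with the naive recurrence, since these agree only once Newton's identities are taken into account. Once this dictionary and the residue lemma are in place, both identities drop out by comparison of coefficients and an application of the defining recurrence.
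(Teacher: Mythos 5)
The paper gives no proof of this theorem at all: it is quoted from MacHenry [9] and used as a black box, so there is nothing in the paper to compare your argument against. Your symmetric-function route is the standard one (and almost certainly the one in [9]), and its main ingredients are correct: $\sum_{n\ge 0}F_{k,n}(t)x^{n}=1/Q(x)$ identifies $F_{k,n}$ with $h_{n}(\lambda)$; the power-sum reading $G_{k,n}=p_{n}(\lambda)$ must indeed, as you insist, come from Newton's identities (equivalently from the derivative vector $(-t_{k-1},\ldots,-t_{1}(k-1),k)$ seeding $D_{(k)}^{\infty}$) rather than from the naive recurrence started at $G_{k,0}=k$, $G_{k,1}=t_{1}$, which already fails at $G_{k,2}$ for $k\ge 3$; the implicit-differentiation formula $\partial\lambda_{i}/\partial t_{j}=\lambda_{i}^{k-j}/P'(\lambda_{i})$ combined with the Lagrange identity $\sum_{i}\lambda_{i}^{m}/P'(\lambda_{i})=h_{m-k+1}(\lambda)$ gives $\partial G_{k,n}/\partial t_{j}=nF_{k,n-j}$, and contracting with $t_{j}$ collapses the sum by the defining recurrence. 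All of this is sound.

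One concrete caution, though. Under the paper's own normalization ($F_{k,0}=1$, so $F_{k,n}=h_{n}$ and $G_{k,n}=p_{n}$), what your argument actually proves is $\sum_{j}t_{j}\,\partial G_{k,n}/\partial t_{j}=nF_{k,n}$ and $nF_{k,n}=\sum_{r=1}^{n}G_{k,r}F_{k,n-r}$; for $k=2$, $n=2$ the left side is $2(t_{1}^{2}+t_{2})=2F_{2,2}$, not $2F_{2,3}$, so the displayed $F_{k,n+1}$ only makes sense in MacHenry's shifted indexing. More importantly, the upper limit $k$ in the convolution cannot be rescued in the way you suggest: the sum $\sum_{r=1}^{n}p_{r}h_{n-r}$ does \emph{not} fold down to a length-$k$ sum via the recurrence (for $k=2$, $n=3$ one has $p_{1}h_{2}+p_{2}h_{1}=2t_{1}^{3}+3t_{1}t_{2}\ne 3h_{3}=3t_{1}^{3}+6t_{1}t_{2}$). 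The truncation is forced solely by $F_{k,m}=0$ for $m<0$, which cuts the sum at $r=n$ (or $r=n+1$ in the shifted indexing), never at $r=k$; the printed upper limit $k$ is a transcription error, and your write-up should state and prove the identity with the correct range rather than try to justify the range as printed.
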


\bigskip

\begin{thrm}
$[$9$]$ Let $\lambda _{j}$ be the roots of the polynomial (\ref%
{core}) and let%
\begin{equation*}
\Delta _{k}=\left\vert
\begin{array}{ccc}
1 & \cdots & 1 \\
\lambda _{1} & \cdots & \lambda _{k} \\
\vdots &  & \vdots \\
\lambda _{1}^{k-1} & \cdots & \lambda _{k}^{k-1}%
\end{array}%
\right\vert \text{ and }\Delta _{k,n}=\left\vert
\begin{array}{ccc}
1 & \cdots & 1 \\
\lambda _{1} & \cdots & \lambda _{k} \\
\vdots &  & \vdots \\
\lambda _{1}^{k-2} & \cdots & \lambda _{k}^{k-2} \\
\lambda _{1}^{n+k-2} & \cdots & \lambda _{k}^{n+k-2}%
\end{array}%
\right\vert ,
\end{equation*}%
then we have%
\begin{equation}
F_{k,n+1}(t)=\frac{\Delta _{k,n}}{\Delta _{k}}.  \label{adem}
\end{equation}
\end{thrm}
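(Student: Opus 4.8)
The plan is to show that the right-hand side of (\ref{adem}) satisfies the same order-$k$ linear recurrence as the generalized Fibonacci polynomials and then to pin the two sequences together on enough consecutive indices. Throughout I would assume the roots $\lambda_1,\ldots,\lambda_k$ of (\ref{core}) are distinct, so that $\Delta_k\neq 0$ and the quotient is well defined; the confluent (repeated-root) case can be recovered afterward by a limiting argument or by replacing $\Delta_k$ with a confluent Vandermonde determinant.

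The only structural fact I need about the roots is that $P(\lambda_j)=0$ gives $\lambda_j^{k}=t_1\lambda_j^{k-1}+\cdots+t_k$, and multiplying by $\lambda_j^{m-k}$ yields
\begin{equation*}
\lambda_j^{m}=t_1\lambda_j^{m-1}+t_2\lambda_j^{m-2}+\cdots+t_k\lambda_j^{m-k}
\end{equation*}
for each $j$ and each exponent occurring below. I would then set $a_n=\Delta_{k,n}/\Delta_k$ and apply this identity to the last row $(\lambda_1^{\,n+k-2},\ldots,\lambda_k^{\,n+k-2})$ of the matrix defining $\Delta_{k,n}$. Since a determinant is multilinear in its rows and the first $k-1$ rows are fixed, expanding the top exponent of $\Delta_{k,n+1}$ through the relation above splits it into $k$ determinants whose last rows carry the exponents $n+k-2,\,n+k-3,\ldots,n-1$, that is, precisely the last rows of $\Delta_{k,n},\Delta_{k,n-1},\ldots,\Delta_{k,n-k+1}$. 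This gives
\begin{equation*}
\Delta_{k,n+1}=t_1\Delta_{k,n}+t_2\Delta_{k,n-1}+\cdots+t_k\Delta_{k,n-k+1},
\end{equation*}
so after dividing by $\Delta_k$ the sequence $a_n$ obeys the same recurrence as $F_{k,n+1}(t)$. When a shifted exponent happens to coincide with one of the fixed exponents $0,1,\ldots,k-2$ the corresponding determinant has two equal rows and simply drops out, which is harmless and in fact produces the vanishing seed terms.

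Finally I would match initial data. For the indices at which the last-row exponent $n+k-2$ lies in $\{0,1,\ldots,k-2\}$ the determinant $\Delta_{k,n}$ has a repeated row and vanishes, while for the index making the exponents sweep out the full set $\{0,1,\ldots,k-1\}$ the determinant collapses to $\Delta_k$ and $a_n=1$ there. Comparing these $k$ consecutive anchor values against the defining values of $F_{k,n}(t)$ and invoking the shared recurrence, a straightforward induction forces $a_n=F_{k,n+1}(t)$ for all $n$, which is (\ref{adem}). The main obstacle is exactly this boundary bookkeeping: because an error of one in the last-row exponent shifts the whole identity, one must check that the $k$ consecutive degenerate/Vandermonde values of $a_n$ align index-for-index with the seed values of $F_{k,n+1}(t)$; keeping the conventions $F_{k,n}(t)=0$ for $n<0$ and $F_{k,0}(t)=1$ firmly in hand is what makes the anchoring unambiguous.
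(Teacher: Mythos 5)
The paper gives no proof of this theorem at all --- it is quoted from MacHenry [9] --- so there is no in-house argument to compare against; your proposal has to stand on its own. Its overall strategy is the standard and correct one: expand the last row of $\Delta_{k,n+1}$ through $\lambda_j^{m}=t_1\lambda_j^{m-1}+\cdots+t_k\lambda_j^{m-k}$, use multilinearity to get $\Delta_{k,n+1}=t_1\Delta_{k,n}+\cdots+t_k\Delta_{k,n-k+1}$, and then anchor $k$ consecutive values. That recurrence step is fine.

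The gap is exactly the step you yourself call ``the main obstacle'' and then never carry out: the index alignment, and in fact it fails for the statement as printed. With last-row exponent $n+k-2$, the determinant $\Delta_{k,n}$ vanishes for $2-k\leq n\leq 0$ (repeated row) and equals $\Delta_k$ at $n=1$, so $a_{2-k}=\cdots=a_0=0$ and $a_1=1$. Matching these against the paper's seeds $F_{k,m}(t)=0$ for $m<0$ and $F_{k,0}(t)=1$, the shared recurrence forces $a_n=F_{k,n-1}(t)$, not $F_{k,n+1}(t)$. The cheapest check is $n=1$: the right-hand side of (\ref{adem}) is $\Delta_{k,1}/\Delta_k=1$, while the left-hand side is $F_{k,2}(t)=t_1^2+t_2$, so the claimed identity would read $t_1^2+t_2=1$. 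Hence your concluding ``straightforward induction'' cannot force $a_n=F_{k,n+1}(t)$; it forces $a_n=F_{k,n-1}(t)$. To make the argument close you must either shift the last-row exponent of $\Delta_{k,n}$ to $\lambda_j^{n+k}$ or record that the identity, under this paper's indexing of $F_{k,n}(t)$, should read $F_{k,n-1}(t)=\Delta_{k,n}/\Delta_k$ (the discrepancy is almost certainly a transcription of MacHenry's different seed convention). Your method is sound and proves the corrected identity; as written, its final paragraph asserts an alignment that does not hold.
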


\bigskip

\begin{thrm}
$[$10$]$$A_{(k)}$ is $k\times k$ matrix in (\ref{ak}) then,
\begin{equation*}
\det A_{(k)}=(-1)^{k+1}t_{k}
\end{equation*}%
and%
\begin{equation}
\det A_{(k)}^{n}=(-1)^{n(k+1)}t_{k}^{n}.  \label{a1}
\end{equation}
\end{thrm}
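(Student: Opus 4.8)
The plan is to compute $\det A_{(k)}$ directly by Laplace (cofactor) expansion along the first column, exploiting the sparse structure of the companion-type matrix in (\ref{ak}). The only nonzero entry of the first column is $t_{k}$, sitting in the bottom-left corner at position $(k,1)$; every other entry of that column is zero. Hence the expansion collapses to a single term,
\begin{equation*}
\det A_{(k)} = (-1)^{k+1}\, t_{k}\, M_{k,1},
\end{equation*}
where $M_{k,1}$ denotes the minor obtained by deleting row $k$ and column $1$, and $(-1)^{k+1}$ is the corresponding cofactor sign.

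The key step is to identify this minor. Deleting the last row and the first column from $A_{(k)}$ leaves exactly rows $1,\ldots,k-1$ and columns $2,\ldots,k$, which by the definition of $A_{(k)}$ is the identity matrix $I_{k-1}$: the $1$'s that lie on the superdiagonal of the original matrix become the main diagonal of the surviving $(k-1)\times(k-1)$ block. Therefore $M_{k,1} = \det I_{k-1} = 1$, and we obtain
\begin{equation*}
\det A_{(k)} = (-1)^{k+1}\, t_{k},
\end{equation*}
which is the first assertion. As an alternative one could observe that $A_{(k)}$ is the companion matrix of the core polynomial (\ref{core}) and read off $\det A_{(k)}$ as $(-1)^{k}$ times the constant term $-t_{k}$ of its characteristic polynomial; the direct expansion, however, avoids invoking that machinery.

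For the second assertion I would simply invoke the multiplicativity of the determinant. Since $\det(BC)=\det B\,\det C$ for square matrices, a trivial induction on $n$ gives $\det A_{(k)}^{\,n} = \bigl(\det A_{(k)}\bigr)^{n}$, and substituting the value just computed yields
\begin{equation*}
\det A_{(k)}^{\,n} = \bigl((-1)^{k+1}\, t_{k}\bigr)^{n} = (-1)^{n(k+1)}\, t_{k}^{\,n}.
\end{equation*}
There is no genuine obstacle here: the whole argument is a one-line cofactor expansion followed by multiplicativity of $\det$. The only points requiring care are the bookkeeping of the cofactor sign $(-1)^{k+1}$ and the recognition that the surviving minor is precisely $I_{k-1}$; once these are pinned down the conclusion is immediate.
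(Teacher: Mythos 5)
Your proof is correct. The paper states this theorem as a citation from [10] and supplies no proof of its own, so there is nothing internal to compare against; your cofactor expansion along the first column (whose only nonzero entry is $t_{k}$ at position $(k,1)$), the identification of the surviving minor as $I_{k-1}$, and the appeal to multiplicativity of the determinant for the power $A_{(k)}^{n}$ constitute the standard and complete argument.
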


Miles [11] defined generalized order-$k$ Fibonacci numbers(GO$k$F) as,%
\begin{equation}
f_{k,n}=\sum\limits_{j=1}^{k}f_{k,n-j}\
\end{equation}%
for $n>k\geq 2$, with boundary conditions:
$f_{k,1}=f_{k,2}=f_{k,3}=\cdots =f_{k,k-2}=0$ and
$f_{k,k-1}=f_{k,k}=1.$\newline

Er [1] defined $k$ sequences of generalized order-$k$ Fibonacci Numbers($k$SO%
$k$F) as; for $n>0,$ $1\leq i\leq k$%
\begin{equation}
f_{k,n}^{\text{ }i}=\sum\limits_{j=1}^{k}c_{j}f_{k,n-j}^{\text{ }i}\
\
\end{equation}%
with boundary conditions for $1-k\leq n\leq 0,$

\begin{equation*}
f_{k,n}^{\text{ }i}=\left\{
\begin{array}{c}
1\text{ \ \ \ \ \ if \ }i=1-n, \\
0\text{ \ \ \ \ \ \ \ \ \ otherwise,}%
\end{array}%
\right.
\end{equation*}%
where $c_{j}$ $(1\leq j\leq k)$ are constant coefficients, $f_{k,n}^{\text{ }%
i}$ is the $n$-th term of $i$-th sequence of order $k$ generalization. $k$%
-th column of this generalization involves the Miles generalization for $%
i=k, $ i.e. $f_{k,n}^{k}=f_{k,k+n-2}.$

\bigskip K\i l\i \c{c} [7] defined $k$ sequences of generalized order-$k$
Pell Numbers($k$SO$k$P) as; for $n>0,$ $1\leq i\leq k$%
\begin{equation}
P_{k,n}^{\text{ }i}=2P_{k,n-1}^{\text{ }i}\ +P_{k,n-2}^{\text{
}i}+\cdots +\ P_{k,n-k}^{\text{ }i}
\end{equation}%
with boundary conditions for $1-k\leq n\leq 0,$

\begin{equation*}
P_{k,n}^{\text{ }i}=\left\{
\begin{array}{c}
1\text{ \ \ \ \ \ if \ }n=1-i, \\
0\text{ \ \ \ \ \ \ \ \ \ otherwise}%
\end{array}%
\right.
\end{equation*}%
where $P_{k,n}^{\text{ }i}$ is the $n$-th term of $i$-th sequence of order $%
k $ generalization.

\bigskip

The well-known Cordonnier(Padovan) sequence $\left\{ C_{n}\right\} $
is defined recursively by the equation,
\begin{equation*}
C_{n}=C_{n-2}+C_{n-3},\ \text{for }n>3\ \
\end{equation*}%
where $P_{1}=1,$ $P_{2}=1,$ $P_{3}=1.$

Van der Laan sequence $\left\{ V_{n}\right\} $ is defined
recursively by the
equation,%
\begin{equation*}
V_{n}=V_{n-2}+V_{n-3},\text{\ for }n>3
\end{equation*}%
where $V_{1}=1,$ $V_{2}=0,$ $V_{3}=1$.

Perrin sequence $\left\{ R_{n}\right\} $ is defined recursively by
the
equation,%
\begin{equation*}
R_{n}=R_{n-2}+R_{n-3},\text{\ for }n>3
\end{equation*}%
where $R_{1}=0,$ $R_{2}=2,$ $R_{3}=3 [12].$

\bigskip

In this paper we studied on generalized order-$k$ Van der Laan numbers $%
v_{k,n}$ and $k$ sequences of the generalized order-$k$ Van der Laan
numbers $v_{k,n}^{i}$ with the help of $k$ sequences of Generalized
Van der Laan Polynomials and generalized order-$k$ Perrin numbers
$r_{k,n}$ and $k$ sequences of the generalized order-$k$ Perrin
numbers $r_{k,n}^{i}$ with the help of $k$ sequences of Generalized
Perrin Polynomials$.$

\section{Generalized Van der Laan and Perrin Polynomials}

\bigskip

Firstly we define generalized Van der Laan polynomial and $k$
sequences of generalized Van der Laan polynomial by the help of
generalized Fibonacci polynomials $(F_{k,n}(t))$ and matrices
$A_{(k)}^{\infty }.$

\bigskip

\begin{definition}
Generalized Fibonacci polynomials $(F_{k,n}(t))$ is called
generalized Van der Laan polynomials in case $t_{1}=0$ for $k\geq 3$
. So generalized Van der Laan polynomials are
\begin{eqnarray*}
V_{k,0}(t) &=&0 \\
V_{k,1}(t) &=&1 \\
V_{k,2}(t) &=&0 \\
V_{k,3}(t) &=&t_{2}V_{k,1}(t) \\
V_{k,4}(t) &=&t_{2}V_{k,2}(t)+t_{3}V_{k,1}(t) \\
&&\vdots \\
V_{k,k-1}(t) &=&t_{2}V_{k,k-3}(t)+\cdots +t_{k-1}V_{k,1}(t)
\end{eqnarray*}
and for $n\geq k$%
\begin{equation*}
V_{k,n}(t)=\sum\limits_{i=2}^{k}t_{i}V_{k,n-i}(t).
\end{equation*}
\end{definition}
\bigskip

For $k\geq 3$ substituting $t_{1}=0$, generalized Fibonacci polynomials $%
(F_{k,n}(t))$ and matrices $A_{(k)}^{\infty }$ are reduced to
following polynomials; for $n>0,$ $1\leq i\leq k$%
\begin{equation}
V_{k,n}^{i}(t)=\sum\limits_{j=2}^{k}t_{j}V_{k,n-j}^{i}(t)
\end{equation}%
with boundary conditions for $1-k\leq n\leq 0,$

\begin{equation*}
V_{k,n}^{i}(t)=\left\{
\begin{array}{c}
1\text{ \ \ \ \ \ if \ }k=i-n, \\
0\text{ \ \ \ \ \ \ \ \ \ otherwise,}%
\end{array}%
\right.
\end{equation*}%
where $V_{k,n}^{i}(t)$ is the $n$-th term of $i$-th sequence of
order $k$ generalization.

\bigskip

\begin{definition}
\bigskip The polynomials derived in (12) is called $k$ sequences of
generalized Van der Laan polynomials.
\end{definition}

\bigskip We note that for $i=k$ and $n\geqslant0$, $V_{k,n-1}^{i}(t)=V_{k,n}(t).$

\begin{exam}
\bigskip We give $k$ sequences of generalized Van der Laan polynomial $%
V_{k,n}^{i}(t)$ for $k=3$ and $k=4$%
\begin{equation*}
\begin{tabular}{|c|c|c|c|}
\hline $n\setminus i$ & $1$ & $2$ & $3$ \\ \hline $-2$ & $1$ & $0$ &
$0$ \\ \hline $-1$ & $0$ & $1$ & $0$ \\ \hline $0$ & $0$ & $0$ & $1$
\\ \hline $1$ & $t_{3}$ & $t_{2}$ & $0$ \\ \hline $2$ & $0$ &
$t_{3}$ & $t_{2}$ \\ \hline $3$ & $t_{2}t_{3}$ & $t_{2}^{2}$ &
$t_{3}$ \\ \hline $4$ & $t_{3}^{2}$ & $2t_{2}t_{3}$ & $t_{2}^{2}$ \\
\hline $5$ & $t_{2}^{2}t_{3}$ & $t_{2}^{3}+t_{3}^{2}$ &
$2t_{2}t_{3}$ \\ \hline $6$ & $2t_{2}t_{3}^{2}$ & $3t_{2}^{2}t_{3}$
& $t_{2}^{3}+t_{3}^{2}$ \\ \hline
$7$ & $t_{3}^{3}+t_{2}^{3}t_{3}$ & $t_{2}^{4}+3t_{2}t_{3}^{2}$ & $%
3t_{2}^{2}t_{3}$ \\
$8$ & $\vdots $ & $\vdots $ & $\vdots $%
\end{tabular}%
\text{ \ }%
\begin{tabular}{|c|c|c|c|c|}
\hline $n\setminus i$ & $1$ & $2$ & $3$ & $4$ \\ \hline $-3$ & $1$ &
$0$ & $0$ & $0$ \\ \hline $-2$ & $0$ & $1$ & $0$ & $0$ \\ \hline
$-1$ & $0$ & $0$ & $1$ & $0$ \\ \hline $0$ & $0$ & $0$ & $0$ & $1$
\\ \hline $1$ & $t_{4}$ & $t_{3}$ & $t_{2}$ & $0$ \\ \hline $2$ &
$0$ & $t_{4}$ & $t_{3}$ & $t_{2}$ \\ \hline $3$ & $t_{2}t_{4}$ &
$t_{2}t_{3}$ & $t_{4}+t_{2}^{2}$ & $t_{3}$ \\ \hline
$4$ & $t_{3}t_{4}$ & $t_{2}t_{4}+t_{3}^{2}$ & $2t_{2}t_{3}$ & $%
t_{4}+t_{2}^{2}$ \\ \hline
$5$ & $\vdots $ & $\vdots $ & $\vdots $ & $\vdots $%
\end{tabular}%
\end{equation*}
\end{exam}

\bigskip In addition%
\begin{equation*}
V_{(k)}=\left[
\begin{array}{ccccc}
0 & 1 & 0 & \ldots & 0 \\
0 & 0 & 1 & \ldots & 0 \\
\vdots & \vdots & \vdots & \ddots & \vdots \\
0 & 0 & 0 & \ldots & 1 \\
t_{k} & t_{k-1} & t_{k-2} & \ldots & 0%
\end{array}%
\right] \text{ }
\end{equation*}%
is the generator matrix of $k$ sequences of generalized Van der Laan
polynomials. Matrix $V_{(k)}^{\infty }$ is obtained by multiplying
$V_{(k)}$ and $V_{(k)}^{-1}$ by the vector $v=(t_{k}, t_{k-1},
t_{k-2} \ldots ,0$).

Note that it is also possible to obtain matrix $V_{(k)}^{\infty }$
from matrix $A_{(k)}^{\infty }$ by substituting $t_{1}=0$.

Let $\widetilde{V_{n}}$ be generalized Van der Laan matrix which is
obtained by $n$-th power of $V_{(k)}$ as;
\begin{equation}
\widetilde{V_{n}}=(V_{(k)})^{n}=\left[
\begin{array}{cccc}
V_{k,n-k+1}^{1}(t) & V_{k,n-k+1}^{2}(t) & \ldots & V_{k,n-k+1}^{k}(t) \\
\vdots & \vdots & \ldots & \vdots \\
V_{k,n-1}^{1}(t) & V_{k,n-1}^{2}(t) & \ddots & V_{k,n-1}^{k}(t) \\
V_{k,n}^{1}(t) & V_{k,n}^{2}(t) & \ldots & V_{k,n}^{k}(t)%
\end{array}%
\right] .  \label{e2.2}
\end{equation}%
It is obvious that $V_{(k)}=\widetilde{V_{1}}.$

\bigskip

\begin{cor}
\bigskip Let $\widetilde{V_{n}}$ be as in (\ref{e2.2}). Then
\begin{equation*}
\det \widetilde{V_{n}}=(-1)^{n(k+1)}t_{k}^{n}.
\end{equation*}
\end{cor}

\begin{proof}
\bigskip It is direct from Theorem 1.3.
\end{proof}

\bigskip We define generalized Perrin polynomial and matrix $R_{(k)}^{\infty
}$ by the help of generalized Lucas polynomials $(G_{k,n}(t))$ and matrices $%
D_{(k)}^{\infty }.$

\bigskip

\begin{definition}
Generalized Lucas polynomials $(G_{k,n}(t))$ are called generalized
Perrin polynomials in case $t_{1}=0$ for $k\geq 3$ . So generalized
Perrin polynomials are;
\begin{eqnarray*}
R_{k,0}(t) &=&k \\
R_{k,1}(t) &=&0 \\
R_{k,2}(t) &=&2t_{2} \\
R_{k,3}(t) &=&t_{2}R_{k,1}(t)+3t_{3} \\
R_{k,4}(t) &=&t_{2}R_{k,2}(t)+t_{3}R_{k,1}(t)+4t_{4} \\
&&\vdots \\
R_{k,k-1}(t) &=&t_{2}R_{k,k-3}(t)+\cdots +t_{k-1}R_{k,1}(t)+kt_{k}
\end{eqnarray*}%
and for $n\geq k$,%
\begin{equation*}
R_{k,n}(t)=\sum\limits_{i=2}^{k}t_{i}R_{k,n-i}(t).
\end{equation*}
\end{definition}

\bigskip

\bigskip We obtain matrix $R_{(k)}$ by using row vector%
\begin{equation*}
(-t_{k-1},\ldots ,-t_{2}(k-2),0,k)
\end{equation*}%
which is obtained from coefficient of derivative of core polynomial
(1). $k$-th
row of matrix $R_{(k)}$ is the vector%
\begin{equation*}
(-t_{k-1},\ldots ,-t_{2}(k-2),0,k)
\end{equation*}%
and $i$-th row is
\begin{equation*}
(-t_{k-1},\ldots ,-t_{2}(k-2),0,k)(V_{(k)})^{-(k-i)}
\end{equation*}%
for $1\leq i\leq k-1.$ It looks like%
\begin{equation}
R_{(k)}=\left[
\begin{array}{c}
(-t_{k-1},\ldots ,-t_{2}(k-2),0,k).(V_{(k)})^{-(k-1)} \\
(-t_{k-1},\ldots ,-t_{2}(k-2),0,k).(V_{(k)})^{-(k-2)} \\
\vdots  \\
(-t_{k-1},\ldots ,-t_{2}(k-2),0,k).(V_{(k)})^{-1} \\
(-t_{k-1},\ldots ,-t_{2}(k-2),0,k)%
\end{array}%
\right] _{k\times k}.  \label{rk}
\end{equation}

\bigskip

\bigskip For $k\geq 3$ substituting $t_{1}=0$, generalized Lucas polynomials
$(G_{k,n}(t))$ and matrices $D_{(k)}^{\infty }$ are reduced to
following polynomials $R_{k,n}^{i}(t)$; for $n>0,$ $1\leq i\leq k$%
\begin{equation}
R_{k,n}^{i}(t)=\sum\limits_{j=2}^{k}t_{j}R_{k,n-j}^{i}(t)
\end{equation}%
with boundary conditions for $1-k\leq n\leq 0,$%
\begin{equation*}
R_{(k)}=[a_{k+n,i}]=R_{k,n}^{i}(t).
\end{equation*}

\bigskip

\begin{definition}
\bigskip \bigskip The polynomials $R_{k,n}^{i}(t)$ derived (15) is called $%
k $ sequences of generalized Perrin polynomials.
\end{definition}

For $k\geq 3$ and substituting $t_{1}=0$ matrix $D_{(k)}^{\infty }$
reduces to matrix $R_{(k)}^{\infty }$. Right hand column of
$R_{(k)}^{\infty }$ contains Generalized Perrin polynomials
$R_{k,n}(t)$. $i$-th column of matrix $R_{(k)}^{\infty }$ becomes
$i$-th sequence of $k$ sequences of generalized Perrin polynomials
vice versa.\bigskip\

We note that matrix $R_{(k)}^{\infty }$ for $t_{1}=0$ in matrix $%
D_{(k)}^{\infty }$ and $R_{(k)}^{\infty }$ contains $k$ sequence of
Perrin polynomials $R_{k,n}^{i}(t).$ Let $\widetilde{R_{n}}$ be
generalized Perrin matrix is obtained by $R_{(k)}.(V_{(k)})^{n}$ as;
\begin{equation}
\widetilde{R_{n}}=R_{(k)}.(V_{(k)})^{n}=\left[
\begin{array}{cccc}
R_{k,n-k+1}^{1}(t) & R_{k,n-k+1}^{2}(t) & \ldots & R_{k,n-k+1}^{k}(t) \\
\vdots & \vdots & \ldots & \vdots \\
R_{k,n-1}^{1}(t) & R_{k,n-1}^{2}(t) & \ddots & R_{k,n-1}^{k}(t) \\
R_{k,n}^{1}(t) & R_{k,n}^{2}(t) & \ldots & R_{k,n}^{k}(t)%
\end{array}%
\right] .  \label{e2.4}
\end{equation}

\begin{exam}
\bigskip We give matrix $R_{(k)}^{\infty }$ and $k$ sequences of generalized
Perrin polynomials for $k=3$ respectively,%
\begin{equation*}
R_{(3)}^{\infty }=\left[
\begin{array}{ccc}
-\frac{t_{2}^{3}}{t_{3}^{2}}+3 & -\frac{t_{2}}{t_{3}} & \frac{t_{2}^{2}}{%
t_{3}^{2}} \\
\frac{t_{2}^{2}}{t_{3}} & 3 & -\frac{t_{2}}{t_{3}} \\
-t_{2} & 0 & 3 \\
3t_{3} & 2t_{2} & 0 \\
0 & 3t_{3} & 2t_{2} \\
2t_{2}t_{3} & 2t_{2}^{2} & 3t_{3} \\
\vdots & \vdots & \vdots%
\end{array}%
\right] \text{ \ and } \ %
\begin{tabular}{|c|c|c|c|}
\hline $n\setminus i$ & $1$ & $2$ & $3$ \\ \hline
$-2$ & $-\frac{t_{2}^{3}}{t_{3}^{2}}+3$ & $-\frac{t_{2}}{t_{3}}$ & $\frac{%
t_{2}^{2}}{t_{3}^{2}}$ \\ \hline $-1$ & $\frac{t_{2}^{2}}{t_{3}}$ &
$3$ & $-\frac{t_{2}}{t_{3}}$ \\ \hline $0$ & $-t_{2}$ & $0$ & $3$ \\
\hline $1$ & $3t_{3}$ & $2t_{2}$ & $0$ \\ \hline $2$ & $0$ &
$3t_{3}$ & $2t_{2}$ \\ \hline
$3$ & $2t_{2}t_{3}$ & $2t_{2}^{2}$ & $3t_{3}$ \\
$ \vdots $ & $\vdots $ & $\vdots $ & $\vdots $%
\end{tabular}%
\end{equation*}
\end{exam}

\bigskip

Now we give four Corollary by using properties of Generalized
Fibonacci and Lucas Numbers.

\begin{cor}
$tr(V_{(k)}^{n})=R_{k,n}(t)$ for $n\in
\mathbb{Z}
,$ where $R_{k,n}(t)$ is the Generalized Perrin polynomials.
\end{cor}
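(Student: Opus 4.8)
The plan is to route the computation through the eigenvalues of $V_{(k)}$. Putting $t_{1}=0$ in the core polynomial (\ref{core}) yields $p(x)=x^{k}-t_{2}x^{k-2}-\cdots -t_{k}$, and $V_{(k)}$ is exactly the companion matrix of $p$: its characteristic polynomial is $p$, so its eigenvalues are the roots $\lambda _{1},\ldots ,\lambda _{k}$ of $p$. Since the trace of any power of a matrix is the corresponding power sum of its eigenvalues,
\begin{equation*}
tr(V_{(k)}^{n})=\sum_{j=1}^{k}\lambda _{j}^{n}=:s_{n}.
\end{equation*}
By Corollary 2.4 (with $n=1$) we have $\det V_{(k)}=(-1)^{k+1}t_{k}\neq 0$, so $V_{(k)}$ is invertible and $0$ is not an eigenvalue; hence the displayed identity holds for every $n\in \mathbb{Z}$, the numbers $\lambda _{j}^{n}$ being the eigenvalues of $V_{(k)}^{n}$ also when $n<0$. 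Thus the claim reduces to proving $R_{k,n}(t)=s_{n}$.

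First I would check that $s_{n}$ satisfies the recurrence defining the generalized Perrin polynomials. Each root obeys $\lambda _{j}^{k}=t_{2}\lambda _{j}^{k-2}+\cdots +t_{k}$; multiplying by $\lambda _{j}^{n-k}$ and summing over $j$ gives
\begin{equation*}
s_{n}=\sum_{i=2}^{k}t_{i}s_{n-i},
\end{equation*}
valid for all $n\geq k$, and, since $\lambda _{j}\neq 0$, for all $n\in \mathbb{Z}$. This is precisely the recurrence of Definition 2.5, which the two-sided sequence $R_{k,n}(t)$ (read off from $R_{(k)}^{\infty }$) also obeys. Two sequences satisfying the same $k$-term linear recurrence on all of $\mathbb{Z}$ and agreeing on $k$ consecutive indices coincide everywhere, so it remains only to match the initial block $s_{0},s_{1},\ldots ,s_{k-1}$ with $R_{k,0}(t),\ldots ,R_{k,k-1}(t)$.

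This base case is the only genuinely computational step, and it is a Newton--Girard identity. The coefficients of $p$ encode the elementary symmetric functions of the $\lambda _{j}$ via $e_{i}=(-1)^{i-1}t_{i}$; in particular $e_{1}=t_{1}=0$ and $e_{2}=-t_{2}$. Newton's identities then express each $s_{m}$ ($0\leq m\leq k-1$) in terms of $t_{2},\ldots ,t_{m}$, giving $s_{0}=k$, $s_{1}=e_{1}=0$, $s_{2}=-2e_{2}=2t_{2}$, $s_{3}=3e_{3}=3t_{3}$, and so on, which are exactly the values listed in Definition 2.5. Equivalently, one may invoke the fact from [9] that the generalized Lucas polynomial $G_{k,n}(t)$ is the power sum $\sum_{j}\lambda _{j}^{n}$ of the roots of the core polynomial; specializing $t_{1}=0$ turns $G_{k,n}$ into $R_{k,n}$ and the roots into the eigenvalues of $V_{(k)}$, so that $R_{k,n}(t)=s_{n}=tr(V_{(k)}^{n})$ at once. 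The main obstacle is thus purely the symmetric-function bookkeeping of the initial values; everything else follows from the eigenvalue picture.
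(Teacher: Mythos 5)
Your argument is correct. Note that the paper itself gives no proof of this corollary: it is listed as one of four facts that follow ``by using properties of Generalized Fibonacci and Lucas Numbers,'' i.e.\ the intended justification is exactly the one-line remark you make at the end --- MacHenry's result that $G_{k,n}(t)$ is the power sum $\sum_{j}\lambda_{j}^{n}$ of the roots of the core polynomial, hence equals $tr(A_{(k)}^{n})$, specialized at $t_{1}=0$ so that $A_{(k)}$ becomes $V_{(k)}$ and $G_{k,n}$ becomes $R_{k,n}$. What your write-up adds is a self-contained verification of that cited fact in the $t_{1}=0$ case: the companion-matrix structure gives $tr(V_{(k)}^{n})=s_{n}$, the characteristic equation gives the common $k$-term recurrence (extendable to negative $n$ because $t_{k}\neq 0$, which you correctly extract from Corollary 2.4 with $n=1$), and Newton--Girard matches the initial block $s_{0},\ldots ,s_{k-1}$ with the initial values listed in Definition 2.5 --- which are themselves precisely the Newton identities with $t_{1}=0$ (modulo what appears to be a typo in the paper's last listed initial value, where the correction term should be $(k-1)t_{k-1}$ rather than $kt_{k}$). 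Two small points worth making explicit: the statement for negative $n$ presupposes $t_{k}\neq 0$, an assumption the paper also makes silently (e.g.\ in the displayed $R_{(3)}^{\infty }$, which divides by $t_{3}$); and your uniqueness step (same $k$-term recurrence on all of $\mathbb{Z}$ plus agreement on $k$ consecutive indices implies identity) also needs $t_{k}\neq 0$ to run the recurrence backwards, which you already have. With those caveats the argument is complete, and it in fact proves more than the paper records, since it supplies the proof the paper omits.
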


\bigskip

\begin{cor}
$V_{k,n}^{i}(t)$ be the $k$ sequences of generalized Perrin and Van
der Laan
polynomials respectively, then%
\begin{equation*}
V_{k,n}^{k}(t)=V_{k,n-1}^{k-1}(t)
\end{equation*}%
\bigskip\ and%
\begin{equation*}
V_{k,n}^{1}(t)=t_{k}V_{k,n-1}^{k}(t)
\end{equation*}
for $n\geq 1.$
\end{cor}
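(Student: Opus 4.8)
The plan is to read both identities directly off the matrix factorization $\widetilde{V_n} = (V_{(k)})^{n}$ recorded in (\ref{e2.2}), using only the trivial associativity relation $(V_{(k)})^{n} = (V_{(k)})^{n-1}V_{(k)}$, i.e. $\widetilde{V_n} = \widetilde{V_{n-1}}\,V_{(k)}$ for every $n\geq 1$. By (\ref{e2.2}) the $(r,c)$ entry of $\widetilde{V_n}$ is $V_{k,n-k+r}^{c}(t)$, so that the bottom row ($r=k$) records $V_{k,n}^{c}(t)$ and the row above it ($r=k-1$) records $V_{k,n-1}^{c}(t)$. Hence every column identity coming from $\widetilde{V_n} = \widetilde{V_{n-1}}V_{(k)}$ collapses, in its bottom entry, to one of the claimed scalar equalities.

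First I would treat the identity $V_{k,n}^{k}(t)=V_{k,n-1}^{k-1}(t)$. Reading off $V_{(k)}$, its $k$-th column has a single nonzero entry, a $1$ in row $k-1$, so it equals the standard basis vector $e_{k-1}$. Therefore the $k$-th column of $\widetilde{V_n}=\widetilde{V_{n-1}}V_{(k)}$ is exactly the $(k-1)$-th column of $\widetilde{V_{n-1}}$. Comparing the bottom entries, the left side is $V_{k,n}^{k}(t)$ and the right side is $V_{k,(n-1)}^{k-1}(t)$, which is the first claim. For the second identity I would instead look at the first column of $V_{(k)}$, whose only nonzero entry is $t_k$ in row $k$, so that column equals $t_k e_k$. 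Consequently the first column of $\widetilde{V_n}$ equals $t_k$ times the last (i.e. $k$-th) column of $\widetilde{V_{n-1}}$; taking bottom entries yields $V_{k,n}^{1}(t)=t_k V_{k,n-1}^{k}(t)$, as desired.

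To justify the stated range $n\geq 1$, I would note that $\widetilde{V_0}=(V_{(k)})^{0}=I_k$, whose $(r,c)$ entry $\delta_{rc}$ agrees with the boundary value $V_{k,r-k}^{c}(t)$ prescribed in the definition (that value is $1$ precisely when $c=r$, since the boundary rule gives $1$ iff $k=c-(r-k)$). Thus the factorization $\widetilde{V_n}=\widetilde{V_{n-1}}V_{(k)}$ is already valid at $n=1$, and the two identities hold for all $n\geq 1$.

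The only care required here is bookkeeping: correctly identifying columns $1$ and $k$ of $V_{(k)}$ and matching the row index $r$ of $\widetilde{V_n}$ with the polynomial subscript $n-k+r$. I therefore expect no genuine obstacle, and the main thing to double-check is that the index alignment in (\ref{e2.2}) is used consistently. One could instead prove each equality by strong induction on $n$, verifying $k$ consecutive boundary/initial values and then invoking the common recurrence $V_{k,n}^{i}(t)=\sum_{j=2}^{k}t_j V_{k,n-j}^{i}(t)$ (which is independent of $i$), but the matrix route above is markedly shorter and self-contained.
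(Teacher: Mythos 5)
Your proof is correct. The paper itself offers no argument for this corollary (it is one of four stated to follow ``by using properties of Generalized Fibonacci and Lucas Numbers''), so there is no official proof to compare against; your column-reading of $\widetilde{V_{n}}=\widetilde{V_{n-1}}V_{(k)}$ is entirely in the spirit of the paper's matrix machinery and supplies the missing details cleanly. The two key observations — that column $k$ of $V_{(k)}$ is $e_{k-1}$ and column $1$ is $t_{k}e_{k}$ — are right, the index bookkeeping matches (\ref{e2.2}), and the identities check against the paper's $k=3$ table (e.g. $V_{3,4}^{1}=t_{3}^{2}=t_{3}V_{3,3}^{3}$). The only caveat worth flagging is that you are taking (\ref{e2.2}) itself as given, i.e. that the entries of $(V_{(k)})^{n}$ really are the polynomials $V_{k,m}^{i}(t)$ defined by the recurrence and boundary conditions; the paper asserts this without proof as well, and your verification that $\widetilde{V_{0}}=I_{k}$ matches the boundary data is exactly the base case one would need to establish it by induction.
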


\bigskip

\begin{cor}
\bigskip $R_{k,n}^{i}(t)$ be the $k$ sequences of generalized Perrin
polynomials respectively, then%
\begin{equation*}
R_{k,n}^{k}(t)=R_{k,n-1}^{k-1}(t)
\end{equation*}%
and%
\begin{equation*}
R_{k,n}^{1}(t)=t_{k}R_{k,n-1}^{k}(t)
\end{equation*}
for $n\geq 1.$
\end{cor}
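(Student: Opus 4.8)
The plan is to read both identities directly off the matrix recurrence satisfied by the Perrin matrices $\widetilde{R_n}$. By the definition in (\ref{e2.4}) we have $\widetilde{R_n}=R_{(k)}(V_{(k)})^{n}$, so factoring one copy of $V_{(k)}$ to the right gives the one-step relation
\[
\widetilde{R_n}=\widetilde{R_{n-1}}\,V_{(k)}.
\]
Consequently each column of $\widetilde{R_n}$ is the linear combination of the columns of $\widetilde{R_{n-1}}$ prescribed by the corresponding column of $V_{(k)}$. Since the $(j,i)$ entry of $\widetilde{R_n}$ is $R_{k,n-k+j}^{i}(t)$, comparing entries column by column and then substituting $m=n-k+j$ will turn these column relations into the claimed recurrences between the polynomials. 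This is the same mechanism that proves the preceding Van der Laan Corollary, so the two arguments run in parallel.

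First I would pin down the two relevant columns of the generator matrix $V_{(k)}$. Because $t_1=0$, its last row is $(t_k,t_{k-1},\ldots,t_2,0)$ while the $1$'s sit on the super-diagonal. Hence the first column of $V_{(k)}$ is $(0,\ldots,0,t_k)^{T}$ (nonzero only in its last entry), and the last column is $(0,\ldots,0,1,0)^{T}$ (a single $1$ in row $k-1$).

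Next I would carry out the two column multiplications. For the first identity, the last column of $\widetilde{R_n}$ equals $\widetilde{R_{n-1}}$ times the last column of $V_{(k)}$, which merely selects the $(k-1)$-st column of $\widetilde{R_{n-1}}$; comparing the $(j,\cdot)$ entries yields $R_{k,n-k+j}^{k}(t)=R_{k,(n-1)-k+j}^{k-1}(t)$, and with $m=n-k+j$ this is $R_{k,m}^{k}(t)=R_{k,m-1}^{k-1}(t)$. For the second identity, the first column of $\widetilde{R_n}$ equals $\widetilde{R_{n-1}}$ times the first column of $V_{(k)}$, i.e.\ $t_k$ times the last column of $\widetilde{R_{n-1}}$; comparing entries gives $R_{k,n-k+j}^{1}(t)=t_k R_{k,(n-1)-k+j}^{k}(t)$, which is $R_{k,m}^{1}(t)=t_k R_{k,m-1}^{k}(t)$.

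The computation is essentially mechanical; the only real care needed is in the index bookkeeping — checking that the row index $j$ corresponds to subscript $n-k+j$ in $\widetilde{R_n}$ and to $(n-1)-k+j=m-1$ in $\widetilde{R_{n-1}}$ — and in correctly identifying the first and last columns of $V_{(k)}$ under the $t_1=0$ reduction. Once those two columns are fixed, both identities fall out immediately, valid for all $n\geq 1$ (indeed for every $n\in\mathbb{Z}$, since the relation $\widetilde{R_n}=\widetilde{R_{n-1}}V_{(k)}$ holds for all integers $n$).
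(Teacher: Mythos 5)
Your proof is correct. The paper states this corollary without any proof (it is one of four corollaries announced as following from ``properties of Generalized Fibonacci and Lucas Numbers''), so there is nothing to compare line by line; but your argument uses exactly the machinery the paper relies on elsewhere (the factorization $\widetilde{R_n}=R_{(k)}(V_{(k)})^{n}=\widetilde{R_{n-1}}V_{(k)}$ from (\ref{e2.4}), as in the proofs of Theorems 2.12 and 2.16), and your identification of the first and last columns of $V_{(k)}$ under $t_1=0$, together with the index bookkeeping $m=n-k+j$, is accurate. This is a clean and complete justification of both identities.
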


\bigskip

\begin{cor}
$R_{k,n}^{i}(t)$ and $V_{k,n}^{i}(t)$ are $k$ sequences of
generalized
Perrin and Van der Laan polynomials respectively, then%
\begin{equation*}
\sum\limits_{j=1}^{k}\frac{\partial R_{k,n}^{k}(t)}{\partial t_{j}}%
t_{j}=n\bigskip V_{k,n}^{k}(t).
\end{equation*}%
\bigskip
\end{cor}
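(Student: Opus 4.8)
The plan is to reduce the statement to MacHenry's derivative identity (Theorem 1.1) by recognizing that the $k$-th sequences appearing here are, up to a harmless index shift, nothing but the ordinary generalized Perrin and Van der Laan polynomials, which in turn are the generalized Lucas and Fibonacci polynomials specialized at $t_{1}=0$. First I would record the two needed identifications. Since the right-hand column of $R_{(k)}^{\infty}$ carries the generalized Perrin polynomials, one has $R_{k,n}^{k}(t)=R_{k,n}(t)$, and by Definition 2.5, $R_{k,n}(t)=G_{k,n}(t)\big|_{t_{1}=0}$. Likewise, the note following Definition 2.2 gives $V_{k,n-1}^{k}(t)=V_{k,n}(t)$, i.e. $V_{k,n}^{k}(t)=V_{k,n+1}(t)$, and by Definition 2.1, $V_{k,n+1}(t)=F_{k,n+1}(t)\big|_{t_{1}=0}$.

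Next I would invoke the polynomial identity of Theorem 1.1,
\[
\sum_{j=1}^{k}\frac{\partial G_{k,n}(t)}{\partial t_{j}}\,t_{j}=n\,F_{k,n+1}(t),
\]
which holds for every $t=(t_{1},\ldots,t_{k})$, and specialize it by setting $t_{1}=0$. On the right-hand side the substitution yields $n\,F_{k,n+1}(t)\big|_{t_{1}=0}=n\,V_{k,n+1}(t)=n\,V_{k,n}^{k}(t)$, which is exactly the target right-hand side; note that the $+1$ shift inside $F_{k,n+1}$ matches the shift $V_{k,n}^{k}(t)=V_{k,n+1}(t)$ automatically, so no index mismatch arises.

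The only point requiring care is the left-hand side. The $j=1$ summand carries the factor $t_{1}$, hence vanishes identically once $t_{1}=0$. For $j\geq 2$, differentiation in $t_{j}$ commutes with the substitution $t_{1}=0$: for any polynomial $f$ one has $\frac{\partial}{\partial t_{j}}\,f(0,t_{2},\ldots,t_{k})=\bigl(\frac{\partial f}{\partial t_{j}}\bigr)(0,t_{2},\ldots,t_{k})$, since $t_{1}$ is held fixed and is independent of $t_{j}$. Applying this to $f=G_{k,n}$ and using $R_{k,n}^{k}(t)=G_{k,n}(t)\big|_{t_{1}=0}$ gives $\bigl[\partial_{t_{j}}G_{k,n}\bigr]_{t_{1}=0}=\partial_{t_{j}}R_{k,n}^{k}(t)$ for $j\geq2$. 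Thus the specialized left-hand side equals $\sum_{j=2}^{k}\partial_{t_{j}}R_{k,n}^{k}(t)\,t_{j}$, and because $R_{k,n}^{k}$ does not involve $t_{1}$ its $j=1$ derivative is zero, so this may be written as $\sum_{j=1}^{k}\partial_{t_{j}}R_{k,n}^{k}(t)\,t_{j}$. Equating the two specialized sides yields the claim.

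The main obstacle is not analytic but organizational: it is the bookkeeping of the index identifications $R_{k,n}^{k}(t)=R_{k,n}(t)$ and $V_{k,n}^{k}(t)=V_{k,n+1}(t)$ together with the clean justification that evaluation at $t_{1}=0$ passes through the partial derivatives for $j\geq2$ while annihilating the $j=1$ term. If one prefers not to quote the examples for the two index identifications, they can instead be extracted directly from the matrix definitions of $V_{(k)}$ and $R_{(k)}$, but the substitution argument above is the shortest route and isolates exactly where the shift enters.
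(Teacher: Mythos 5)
Your overall strategy---specialize Theorem~1.1 at $t_{1}=0$, observe that the $j=1$ summand vanishes, and commute the remaining partial derivatives with the substitution---is exactly the intended derivation; the paper states this corollary (together with Corollaries~2.8--2.10) with no proof at all, as an immediate consequence of the MacHenry identities, so your write-up supplies more detail than the paper does. The identifications $R_{k,n}^{k}(t)=R_{k,n}(t)=G_{k,n}(t)\big|_{t_{1}=0}$ and $V_{k,n}^{k}(t)=V_{k,n+1}(t)$ are correct, as is the argument that evaluation at $t_{1}=0$ passes through $\partial_{t_{j}}$ for $j\geq 2$.

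There is, however, a genuine index error in the remaining identification $V_{k,n+1}(t)=F_{k,n+1}(t)\big|_{t_{1}=0}$. It contradicts the paper's own initial data: Definition~2.1 sets $V_{k,0}(t)=0$ and $V_{k,1}(t)=1$ while $F_{k,0}(t)=1$, so in the paper's indexing $V_{k,m}(t)=F_{k,m-1}(t)\big|_{t_{1}=0}$, hence $V_{k,n}^{k}(t)=F_{k,n}(t)\big|_{t_{1}=0}$ rather than $F_{k,n+1}(t)\big|_{t_{1}=0}$. Taken literally, your specialization of Theorem~1.1 would therefore produce $\sum_{j}\partial_{t_{j}}R_{k,n}^{k}(t)\,t_{j}=n\,V_{k,n+1}^{k}(t)$, which is false: for $k=3$, $n=2$ the left side is $2t_{2}$ while $2V_{3,3}^{3}(t)=2t_{3}$. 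Your computation closes only because Theorem~1.1 as printed carries MacHenry's original indexing of the Fibonacci polynomials (where the degree-$n$ polynomial is called $F_{k,n+1}$), which is itself shifted by one from this paper's convention $F_{k,0}=1$; in the paper's convention the identity reads $\sum_{j}\partial_{t_{j}}G_{k,n}(t)\,t_{j}=nF_{k,n}(t)$ (check $k=2$, $n=2$: the left side is $2(t_{1}^{2}+t_{2})=2F_{2,2}(t)$, not $2F_{2,3}(t)=2t_{1}^{3}+4t_{1}t_{2}$). So your two off-by-one shifts cancel and the stated conclusion $\sum_{j}\partial_{t_{j}}R_{k,n}^{k}(t)\,t_{j}=nV_{k,n}^{k}(t)$ is in fact correct (e.g.\ $k=3$, $n=5$: $R_{3,5}^{3}=5t_{2}t_{3}$ gives left side $10t_{2}t_{3}=5\cdot 2t_{2}t_{3}=5V_{3,5}^{3}$), but the bookkeeping should be made honest rather than rely on this cancellation.
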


\bigskip

\begin{thrm}
$R_{k,n}^{i}(t)$ and $V_{k,n}^{i}(t)$ are $k$ sequences of
generalized Perrin and Van der Laan polynomials respectively, then
\begin{equation*}
R_{k,n}^{i}(t)=(-t_{k-1})V_{k,n-k+1}^{i}(t)+\ldots
+(-t_{2}(k-2))V_{k,n-2}^{i}(t)+kV_{k,n}^{i}(t).
\end{equation*}
\end{thrm}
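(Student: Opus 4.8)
The plan is to obtain the identity directly from the matrix definition of the Perrin matrix, with no induction. By (\ref{e2.2}) we have $\widetilde{V_n}=(V_{(k)})^n$, and by (\ref{e2.4}) the Perrin matrix is $\widetilde{R_n}=R_{(k)}\cdot(V_{(k)})^n=R_{(k)}\cdot\widetilde{V_n}$. Since the bottom row of $\widetilde{R_n}$ is precisely $(R_{k,n}^1(t),\ldots,R_{k,n}^k(t))$, the whole statement will drop out once I compute the bottom row of the product $R_{(k)}\cdot\widetilde{V_n}$ entry by entry.

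First I would isolate the bottom row of $R_{(k)}$. From the construction (\ref{rk}), the $k$-th row of $R_{(k)}$ is the derivative row vector $w=(-t_{k-1},\ldots,-t_2(k-2),0,k)$ itself; the remaining rows, being $w$ times negative powers of $V_{(k)}$, are irrelevant here. Writing $w=(w_1,\ldots,w_k)$, I would record that $w_m=-\,m\,t_{k-m}$ for $1\le m\le k-1$ and $w_k=k$, noting in particular that $w_{k-1}=-(k-1)t_1=0$ because of the specialization $t_1=0$ that defines the Perrin and Van der Laan polynomials.

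Next I would read off the rows of $\widetilde{V_n}$ from (\ref{e2.2}): the $m$-th row is $(V_{k,n-k+m}^1(t),\ldots,V_{k,n-k+m}^k(t))$. Multiplying $w$ by $\widetilde{V_n}$ then gives, in the $i$-th column,
$$R_{k,n}^i(t)=\sum_{m=1}^{k}w_m\,V_{k,n-k+m}^i(t)=\sum_{m=1}^{k-1}(-\,m\,t_{k-m})V_{k,n-k+m}^i(t)+k\,V_{k,n}^i(t).$$
Writing out the terms $m=1,2,\ldots,k$ in order, with the $m=k-1$ term vanishing since its coefficient $w_{k-1}=0$, turns this sum into
$$R_{k,n}^i(t)=(-t_{k-1})V_{k,n-k+1}^i(t)+\cdots+(-t_2(k-2))V_{k,n-2}^i(t)+k\,V_{k,n}^i(t),$$
which is exactly the assertion (and it holds for every $n\in\mathbb{Z}$, since $\det V_{(k)}=(-1)^{k+1}t_k\neq0$ makes $(V_{(k)})^n$ available for negative $n$ as well).

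The computation is short, so the only genuine hazard is the index bookkeeping in the middle step: one must line up the position $m$ inside the derivative vector $w$ with the row shift $n-k+m$ of $\widetilde{V_n}$, and verify that the coefficient attached to $V_{k,n-1}^i(t)$ is $w_{k-1}=-(k-1)t_1=0$. That vanishing is precisely what produces the gap between the $V_{k,n-2}^i(t)$ term and the $V_{k,n}^i(t)$ term in the stated formula. No appeal to Theorem 1.1 or a separate induction is needed; the relation is in effect already encoded in the way $R_{(k)}$ was built from the derivative of the core polynomial (\ref{core}).
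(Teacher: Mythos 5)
Your proposal is correct and follows essentially the same route as the paper's own proof: both extract the bottom row of the identity $\widetilde{R_n}=R_{(k)}\widetilde{V_n}$, where the last row of $R_{(k)}$ is the derivative vector $(-t_{k-1},\ldots,-t_2(k-2),0,k)$. Your version is in fact slightly more careful with the index bookkeeping --- in particular you make explicit that the coefficient of $V_{k,n-1}^{i}(t)$ is $-(k-1)t_1=0$, which the paper leaves implicit (and its displayed conclusion even contains a typo, writing $V_{k,n-1}^{i}(t)$ where $V_{k,n-2}^{i}(t)$ is meant).
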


\bigskip

\begin{proof}
Using \bigskip (\ref{e2.2}) and (\ref{e2.4}) we obtain
\begin{eqnarray*}
\widetilde{R_{n}} &=&R_{(k)}\widetilde{V_{n}} \\
\Rightarrow  &&\left[
\begin{array}{cccc}
R_{k,n-k+1}^{1}(t) & R_{k,n-k+1}^{2}(t) & \ldots  & R_{k,n-k+1}^{k}(t) \\
\vdots  & \vdots  & \ldots  & \vdots  \\
R_{k,n-1}^{1}(t) & R_{k,n-1}^{2}(t) & \ddots  & R_{k,n-1}^{k}(t) \\
R_{k,n}^{1}(t) & R_{k,n}^{2}(t) & \ldots  & R_{k,n}^{k}(t)%
\end{array}%
\right]  \\
&=&\left[
\begin{array}{c}
(-t_{k-1},\ldots ,-t_{2}(k-2),0,k).(V_{(k)})^{-(k)} \\
(-t_{k-1},\ldots ,-t_{2}(k-2),0,k).(V_{(k)})^{-(k-1)} \\
\vdots  \\
(-t_{k-1},\ldots ,-t_{2}(k-2),0,k).(V_{(k)})^{-1} \\
(-t_{k-1},\ldots ,-t_{2}(k-2),0,k)%
\end{array}%
\right]  \\
&&\left[
\begin{array}{cccc}
V_{k,n-k+1}^{1}(t) & V_{k,n-k+1}^{2}(t) & \ldots  & V_{k,n-k+1}^{k}(t) \\
\vdots  & \vdots  & \ldots  & \vdots  \\
V_{k,n-1}^{1}(t) & V_{k,n-1}^{2}(t) & \ddots  & V_{k,n-1}^{k}(t) \\
V_{k,n}^{1}(t) & V_{k,n}^{2}(t) & \ldots  & V_{k,n}^{k}(t)%
\end{array}%
\right] .
\end{eqnarray*}%
After use matrix multiplication we get%
\begin{equation*}
R_{k,n}^{i}(t)=(-t_{k-1})V_{k,n-k+1}^{i}(t)+\ldots
+(-t_{2}(k-2))V_{k,n-1}^{i}(t)+kV_{k,n}^{i}(t).
\end{equation*}
\end{proof}

\bigskip

\begin{exam}
\bigskip We obtain $R_{4,5}^{3}(t)$ using Theorem (2.12)%
\begin{eqnarray*}
R_{4,5}^{3}(t)
&=&(-t_{3})V_{4,5-4+1}^{3}(t)+(-t_{2}(k-2))V_{4,5-3+1}^{3}(t)+kV_{4,5}^{3}(t)
\\
&=&(-t_{3})V_{4,2}^{3}(t)+(-t_{2}(4-2))V_{4,3}^{3}(t)+kV_{4,5}^{3}(t) \\
&=&(-t_{3})t_{3}+(-2t_{2})(t_{4}+t_{2}^{2})+4(t_{3}^{2}+t_{2}^{3})=6t_{2}t_{4}+2t_{2}^{3}+3t_{3}^{2}.
\end{eqnarray*}
\end{exam}
\bigskip

\begin{thrm}
\label{t1}Let $V_{k,n}^{i}(t)$ be the $k$ sequences of generalized
Van der Laan
polynomials, then%
\begin{equation*}
V_{k,n+m}^{i}(t)=\sum\limits_{j=1}^{k}V_{k,m}^{j}(t)V_{k,n-k+j}^{i}(t).
\end{equation*}
\bigskip
\end{thrm}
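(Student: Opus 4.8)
The plan is to exploit the fact that the generalized Van der Laan matrix $\widetilde{V_n}$ in (\ref{e2.2}) is exactly the $n$-th power of the single generator matrix $V_{(k)}$, so that the desired addition formula is just the semigroup law $V_{(k)}^{n+m}=V_{(k)}^{m}V_{(k)}^{n}$ written out entrywise. This is the same mechanism used just above for the relation between $R_{k,n}^{i}(t)$ and $V_{k,n}^{i}(t)$, where the matrix identity $\widetilde{R_n}=R_{(k)}\widetilde{V_n}$ was read off entry by entry; the only new point here is that both factors are powers of one and the same matrix.

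First I would fix the indexing convention implicit in (\ref{e2.2}): the entry in row $p$ and column $j$ of $\widetilde{V_n}=(V_{(k)})^{n}$ equals $V_{k,\,n-k+p}^{\,j}(t)$ for $1\le p,j\le k$. In particular the bottom row ($p=k$) consists of the polynomials $V_{k,n}^{\,j}(t)$, which is precisely where the left-hand side of the theorem sits, while the $(j,i)$ entry for a general row $j$ is $V_{k,\,n-k+j}^{\,i}(t)$, matching the second factor on the right-hand side.

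Next I would apply the elementary law of exponents $(V_{(k)})^{n+m}=(V_{(k)})^{m}(V_{(k)})^{n}$, that is $\widetilde{V_{n+m}}=\widetilde{V_{m}}\,\widetilde{V_{n}}$. This is legitimate even for negative exponents, since $V_{(k)}$ is invertible: its determinant is $(-1)^{k+1}t_k\ne 0$, which is also why $\det\widetilde{V_n}=(-1)^{n(k+1)}t_k^{n}$ as recorded earlier. Extracting the $(k,i)$ entry on both sides and expanding the product by the rule for matrix multiplication then yields
\begin{equation*}
V_{k,n+m}^{\,i}(t)=\big(\widetilde{V_{m}}\widetilde{V_{n}}\big)_{k,i}=\sum_{j=1}^{k}\big(\widetilde{V_{m}}\big)_{k,j}\big(\widetilde{V_{n}}\big)_{j,i}=\sum_{j=1}^{k}V_{k,m}^{\,j}(t)\,V_{k,\,n-k+j}^{\,i}(t),
\end{equation*}
which is exactly the claimed identity.

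The step demanding the most care is bookkeeping rather than calculation: one must multiply in the order $\widetilde{V_m}\widetilde{V_n}$ (not $\widetilde{V_n}\widetilde{V_m}$) and read the bottom row, so that the first factor specializes to $V_{k,m}^{\,j}(t)$ while the second retains the shifted index $n-k+j$. The opposite order, or reading a different row, would produce an equally valid but differently indexed formula; hence the essential task is simply to align the row-and-column extraction with the precise form stated in the theorem.
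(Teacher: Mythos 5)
Your proof is correct and follows essentially the same route as the paper: both establish $\widetilde{V_{n+m}}=\widetilde{V_m}\widetilde{V_n}$ from the power law for $V_{(k)}$ and then read off the $(k,i)$ entry via matrix multiplication. Your version is in fact slightly more careful than the paper's about which row and which factor order produce the exact indexing in the stated formula.
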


\bigskip

\begin{proof}
\bigskip We know that $\widetilde{V_{n}}=(V_{(k)})^{n}$. We may rewrite it as%
\begin{eqnarray*}
(V_{(k)})^{n+1} &=&(V_{(k)})^{n}(V_{(k)})=(V_{(k)})(V_{(k)})^{n} \\
&\Rightarrow &\widetilde{V_{n+1}}=\widetilde{V_{n}}\widetilde{V_{1}}=%
\widetilde{V_{1}}\widetilde{V_{n}}
\end{eqnarray*}%
and inductively%
\begin{equation}
\widetilde{V_{n+m}}=\widetilde{V_{n}}\widetilde{V_{m}}=\widetilde{V_{m}}%
\widetilde{V_{n}}.  \label{a2}
\end{equation}%
Consequently, any element of $\widetilde{V_{n+m}}$ is the product of
a row
of $\widetilde{V_{n}}$ and a column of $\widetilde{V_{m}};$ that is%
\begin{equation*}
V_{k,n+m}^{i}(t)=\sum\limits_{j=1}^{k}V_{k,m}^{j}(t)V_{k,n-k+j}^{i}(t).
\end{equation*}
\end{proof}

\bigskip

\begin{cor}
\bigskip \bigskip Taking $n=m$ in (\ref{a2}) we obtain $(\widetilde{V_{n}}%
)^{2}=\widetilde{V_{n}}\widetilde{V_{n}}=\widetilde{V_{n+n}}=\widetilde{%
V_{2n}}$\bigskip .
\end{cor}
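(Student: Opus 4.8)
The plan is to treat this statement as an immediate specialization of equation (\ref{a2}), which has already been established inductively in the proof of Theorem \ref{t1}, so no fresh machinery is required. First I would recall that (\ref{a2}) asserts
\[
\widetilde{V_{n+m}}=\widetilde{V_{n}}\widetilde{V_{m}}=\widetilde{V_{m}}\widetilde{V_{n}}
\]
for all admissible $n$ and $m$. The identity to be proved is then obtained simply by setting $m=n$ throughout this relation.

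Carrying this out, substituting $m=n$ into the product $\widetilde{V_{n}}\widetilde{V_{m}}$ gives $\widetilde{V_{n}}\widetilde{V_{n}}=(\widetilde{V_{n}})^{2}$, while the same substitution on the left-hand side $\widetilde{V_{n+m}}$ gives $\widetilde{V_{n+n}}=\widetilde{V_{2n}}$. Equating the two expressions yields $(\widetilde{V_{n}})^{2}=\widetilde{V_{2n}}$, which is exactly the claim.

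There is essentially no obstacle here; the only point meriting a word of care is that the substitution $m=n$ be legitimate, that is, that (\ref{a2}) genuinely holds at the value $m=n$. This is guaranteed because (\ref{a2}) was proved for arbitrary $n,m$, with $V_{(k)}$ invertible by the determinant computation following its definition, so that $(V_{(k)})^{n}$ is well defined for every integer exponent and the power law $(V_{(k)})^{n}(V_{(k)})^{n}=(V_{(k)})^{2n}$ applies without restriction. Hence the proof reduces to the single diagonal substitution into an already-verified matrix identity, and no additional induction or element-wise calculation is needed beyond what Theorem \ref{t1} supplies.
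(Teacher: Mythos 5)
Your proposal is correct and matches the paper exactly: the corollary is nothing more than the substitution $m=n$ into equation (\ref{a2}), which is precisely what the paper intends (it offers no separate proof beyond this observation). Your extra remark about invertibility of $V_{(k)}$ is harmless but unnecessary, since (\ref{a2}) already covers the case $m=n$ directly.
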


\begin{thrm}
\bigskip $R_{k,n}^{i}(t)$ and $V_{k,n}^{i}(t)$ are $k$ sequences of
generalized Perrin and Van der Laan polynomials respectively, then%
\begin{equation*}
R_{k,n}^{i}(t)=kt_{k}V_{k,n-k}^{i}(t)+\cdots
+3t_{3}V_{k,n-3}^{i}(t)+2t_{2}V_{k,n-2}^{i}(t).
\end{equation*}
\end{thrm}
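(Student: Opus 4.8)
The plan is to deduce this identity from the preceding Theorem~2.12 together with the defining recurrence of the Van der Laan polynomials, avoiding any fresh matrix computation. Theorem~2.12 already writes $R_{k,n}^{i}(t)$ in terms of the coefficient vector $(-t_{k-1},\ldots,-t_{2}(k-2),0,k)$ of the derivative of the core polynomial, so the first step is to put that theorem into closed summation form. Reading off the vector, the weight multiplying $V_{k,n-j}^{i}(t)$ is $-(k-j)t_{j}$ for $2\le j\le k-1$ (the $j=1$ contribution carries the factor $-(k-1)t_{1}=0$), while $V_{k,n}^{i}(t)$ carries the factor $k$; thus Theorem~2.12 reads
\[
R_{k,n}^{i}(t)=kV_{k,n}^{i}(t)-\sum_{j=2}^{k-1}(k-j)t_{j}V_{k,n-j}^{i}(t).
\]

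The key move is to expand the stand-alone term $kV_{k,n}^{i}(t)$ by the defining recurrence~(12), $V_{k,n}^{i}(t)=\sum_{j=2}^{k}t_{j}V_{k,n-j}^{i}(t)$. Substituting $kV_{k,n}^{i}(t)=\sum_{j=2}^{k}k\,t_{j}V_{k,n-j}^{i}(t)$ gives
\[
R_{k,n}^{i}(t)=\sum_{j=2}^{k}k\,t_{j}V_{k,n-j}^{i}(t)-\sum_{j=2}^{k-1}(k-j)t_{j}V_{k,n-j}^{i}(t),
\]
and then I would collect the coefficient of each $V_{k,n-j}^{i}(t)$. For $2\le j\le k-1$ it equals $k-(k-j)=j$, and for $j=k$, which occurs only in the first sum, it equals $k$. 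Hence $R_{k,n}^{i}(t)=\sum_{j=2}^{k}j\,t_{j}V_{k,n-j}^{i}(t)$, which is exactly the asserted expansion $kt_{k}V_{k,n-k}^{i}(t)+\cdots+3t_{3}V_{k,n-3}^{i}(t)+2t_{2}V_{k,n-2}^{i}(t)$. Because Theorem~2.12 and recurrence~(12) hold for every index $i$ with $1\le i\le k$, the computation is uniform in $i$ and no case analysis is needed.

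The only delicate point is the bookkeeping in the first step: one must read the derivative vector so that the weight on $V_{k,n-j}^{i}(t)$ is precisely $-(k-j)t_{j}$, and must note that the $j=k$ term is absent from Theorem~2.12 (since $P'$ has degree $k-1$ and receives no contribution from $t_{k}$), so that this term is furnished entirely by the expanded recurrence. Once the weights are aligned, the elementary identity $j+(k-j)=k$ collapses the two sums and the result follows at once. As a consistency check I would test the case $k=4$, $n=5$, $i=3$ of Example~2.13, where both forms must return $R_{4,5}^{3}(t)=6t_{2}t_{4}+2t_{2}^{3}+3t_{3}^{2}$.
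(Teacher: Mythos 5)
Your proposal is correct: the reading of Theorem~2.12 as $R_{k,n}^{i}(t)=kV_{k,n}^{i}(t)-\sum_{j=2}^{k-1}(k-j)t_{j}V_{k,n-j}^{i}(t)$ is accurate (the omitted $j=1$ term indeed carries the factor $-(k-1)t_{1}=0$, and the missing $j=k$ term reflects that $P'$ has no $t_{k}$ contribution), and the collapse $kt_{j}-(k-j)t_{j}=jt_{j}$ after substituting the recurrence~(12) for $kV_{k,n}^{i}(t)$ yields exactly $\sum_{j=2}^{k}jt_{j}V_{k,n-j}^{i}(t)$; your numerical check against Example~2.13 also goes through. The route differs from the paper's in organization rather than in substance. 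The paper does not invoke Theorem~2.12 at all: it writes $\widetilde{R_{n}}=R_{(k)}\widetilde{V_{1}}\widetilde{V_{n-1}}$, computes that the bottom row of $R_{(k)}V_{(k)}$ is $(kt_{k},\ldots,3t_{3},2t_{2},0)$, and then reads the identity off the product with $\widetilde{V_{n-1}}$. Right-multiplying the derivative vector by $V_{(k)}$ is of course the matrix incarnation of applying the recurrence once and shifting indices, so the two arguments perform the same algebra; but yours is a genuine deduction of Theorem~2.16 from Theorem~2.12, needing no fresh matrix computation, which makes the logical dependence between the two theorems explicit and keeps everything at the level of scalar identities. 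What you give up is only self-containedness: your proof is valid exactly where Theorem~2.12 and recurrence~(12) are (i.e.\ for $n>0$, or wherever the matrix-extended sequences satisfy the recurrence), whereas the paper's factorization argument is stated directly in terms of the matrices. One could also note that your bookkeeping silently corrects the typographical slip in the paper's own proof of Theorem~2.12, where the final display writes $V_{k,n-1}^{i}(t)$ where the statement (correctly) has $V_{k,n-2}^{i}(t)$.
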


\bigskip

\begin{proof}
\bigskip
\begin{eqnarray*}
\widetilde{R_{n}} &=&R_{(k)}\widetilde{V_{n}}=R_{(k)}\widetilde{V_{1}}%
\widetilde{V_{n-1}} \\
&\Rightarrow &\left[
\begin{array}{cccc}
R_{k,n-k+1}^{1}(t) & R_{k,n-k+1}^{2}(t) & \ldots  & R_{k,n-k+1}^{k}(t) \\
\vdots  & \vdots  & \ldots  & \vdots  \\
R_{k,n-1}^{1}(t) & R_{k,n-1}^{2}(t) & \ddots  & R_{k,n-1}^{k}(t) \\
R_{k,n}^{1}(t) & R_{k,n}^{2}(t) & \ldots  & R_{k,n}^{k}(t)%
\end{array}%
\right]  \\
&=&\left[
\begin{array}{c}
(-t_{k-1},\ldots ,-t_{2}(k-2),0,k).(V_{(k)})^{-(k)} \\
(-t_{k-1},\ldots ,-t_{2}(k-2),0,k).(V_{(k)})^{-(k-1)} \\
\vdots  \\
(-t_{k-1},\ldots ,-t_{2}(k-2),0,k).(V_{(k)})^{-1} \\
(-t_{k-1},\ldots ,-t_{2}(k-2),0,k)%
\end{array}%
\right] \left[
\begin{array}{ccccc}
0 & 1 & 0 & \ldots  & 0 \\
0 & 0 & 1 & \ldots  & 0 \\
\vdots  & \vdots  & \vdots  & \ddots  & \vdots  \\
0 & 0 & 0 & \ldots  & 1 \\
t_{k} & t_{k-1} & t_{k-2} & \ldots  & 0%
\end{array}%
\right]  \\
&&\left[
\begin{array}{cccc}
V_{k,n-k}^{1}(t) & V_{k,n-k}^{2}(t) & \ldots  & V_{k,n-k}^{k}(t) \\
\vdots  & \vdots  & \ldots  & \vdots  \\
V_{k,n-2}^{1}(t) & V_{k,n-2}^{2}(t) & \ddots  & V_{k,n-2}^{k}(t) \\
V_{k,n-1}^{1}(t) & V_{k,n-1}^{2}(t) & \ldots  & V_{k,n-1}^{k}(t)%
\end{array}%
\right]  \\
&=&\left[
\begin{array}{c}
(-t_{k-1},\ldots ,-t_{2}(k-2),0,k).(V_{(k)})^{-(k-1)} \\
(-t_{k-1},\ldots ,-t_{2}(k-2),0,k).(V_{(k)})^{-(k-2)} \\
\vdots  \\
(-t_{k-1},\ldots ,-t_{2}(k-2),0,k) \\
(kt_{k},\ldots ,3t_{3},2t_{2},0)%
\end{array}%
\right]  \\
&&\left[
\begin{array}{cccc}
V_{k,n-k}^{1}(t) & V_{k,n-k}^{2}(t) & \ldots  & V_{k,n-k}^{k}(t) \\
\vdots  & \vdots  & \ldots  & \vdots  \\
V_{k,n-2}^{1}(t) & V_{k,n-2}^{2}(t) & \ddots  & V_{k,n-2}^{k}(t) \\
V_{k,n-1}^{1}(t) & V_{k,n-1}^{2}(t) & \ldots  & V_{k,n-1}^{k}(t)%
\end{array}%
\right] .
\end{eqnarray*}%
Using matrix multiplication we obtain
\begin{equation*}
R_{k,n}^{i}(t)=kt_{k}V_{k,n-k}^{i}(t)+\cdots
+3t_{3}V_{k,n-3}^{i}(t)+2t_{2}V_{k,n-2}^{i}(t).
\end{equation*}
\end{proof}

\bigskip

\section{Generalized order-$k$ Van der Laan and Perrin numbers}

\bigskip

In this section we define generalized order-$k$ Van der Laan numbers
$v_{k,n}
$ and $k$ sequences of the generalized order-$k$ Van der Laan numbers $%
v_{k,n}^{i}$ by the help of $k$ sequences of Generalized Van der
Laan
Polynomials. In addition we define generalized order-$k$ Perrin numbers $%
r_{k,n}$ and $k$ sequences of the generalized order-$k$ Perrin numbers $%
r_{k,n}^{i}$ by the help of $k$ sequences of Generalized Perrin
Polynomials.

\bigskip

\begin{definition}
For $t_{s}=1,$ $2\leq s\leq k$, the generalized Van der Laan polynomial $%
V_{k,n}(t)$ and $V_{(k)}^{\infty }$ together are reduced to%
\begin{equation*}
v_{k,n}=\sum\limits_{j=2}^{k}v_{k,n-j}
\end{equation*}%
with boundary conditions%
\begin{equation*}
v_{k,1-k}=v_{k,2-k}=\ldots =v_{k,-2}=0,\text{ }v_{k,-1}=1\ \text{and }%
v_{k,0}=0,
\end{equation*}%
which is called generalized order-$k$ Van der Laan numbers(GO$k$V).
\end{definition}

When $k=3$, it is reduced to ordinary Van der Laan numbers.

\bigskip

\begin{definition}
For $t_{s}=1$, $2\leq s\leq k,$ $V_{k,n}^{i}(t)$ can be written explicitly as%
\begin{equation}
v_{k,n}^{i}=\sum\limits_{j=2}^{k}v_{k,n-j}^{i} \label{vtn}
\end{equation}
for $n>0$ and $1\leq i\leq k,$\ with boundary conditions%
\begin{equation*}
v_{k,n}^{i}=\left\{
\begin{array}{lc}
1\ \ \ \ \ \ \ \ \text{ \ if }i-n=k, &  \\
0\ \text{ \ \ \ \ \ \ \ \ otherwise} &
\end{array}%
\right. \ \
\end{equation*}%
for $1-k\leq n\leq 0,$ where $v_{k,n}^{i}$ is the $n$-th term of
$i$-th sequence. This generalization is called $k$ sequences of the
generalized order-$k$ Van der Laan numbers($k$SO$k$V).
\end{definition}

\bigskip When $i=k=3,$ we obtain ordinary Van der Laan numbers and for any
integer $k,$ $v_{k,n-1}^{k}=v_{k,n}$.

\begin{exam}
Substituting $k=3$ and $i=2$ we obtain the generalized order-$3$ Van
der
Laan sequence as;%
\begin{equation*}
v_{3,-2}^{2}=0,\text{ }v_{3,-1}^{2}=1,\text{ }v_{3,0}^{2}=0,\text{ }%
v_{3,1}^{2}=1,\text{ }v_{3,2}^{2}=1,\text{ }v_{3,3}^{2}=1,\text{ }%
v_{3,4}^{2}=2,\ldots
\end{equation*}
\end{exam}

\bigskip

\bigskip We firtly give some properties of $k$ sequences of the generalized
order-$k$ Van der Laan numbers($k$SO$k$V) using properties of $k$
sequences of generalized Van der Laan polynomials($V_{k,n}^{i}(t)$).

\bigskip

\begin{cor}
Matrix multiplication and (\ref{vtn}) can be used to obtain
\begin{equation*}
V_{n}^{\sim }=A_{1}^{n}
\end{equation*}%
where\
\begin{equation}
A_{1}=\left[
\begin{array}{ccccccc}
0 & 1 & 0 & 0 & \ldots  & 0 & 0 \\
0 & 0 & 1 & 0 & \ldots  & 0 & 0 \\
0 & 0 & 0 & 1 & \cdots  & 0 & 0 \\
\vdots  & \vdots  & \vdots  &  & \ddots  & \vdots  & \vdots  \\
0 & 0 & 0 & 0 & \ldots  & 0 & 1 \\
1 & 1 & 1 & 1 & \ldots  & 1 & 0%
\end{array}%
\right] _{k\times k}=\left[
\begin{array}{cccc}
0 &  &  &  \\
0 &  & I &  \\
0 &  &  &  \\
1 & \ldots  & 1 & 0%
\end{array}%
\right] _{k\times k}\ \ \ \ \ \ \
\end{equation}%
where $I$ is ($k-1)\times (k-1)$ identity matrix and $V_{n}^{\sim }$
is a
matrix as;%
\begin{equation}
V_{n}^{\sim }=\left[
\begin{array}{cccc}
v_{k,n-k+1}^{1} & v_{k,n-k+1}^{2} & \ldots  & v_{k,n-k+1}^{k} \\
\vdots  & \vdots  & \ldots  & \vdots  \\
v_{k,n-1}^{1} & v_{k,n-1}^{2} & \ddots  & v_{k,n-1}^{k} \\
v_{k,n}^{1} & v_{k,n}^{2} & \ldots  & v_{k,n}^{k}%
\end{array}%
\right]   \label{vn}
\end{equation}%
which is contained by $k\times k$ block of\ $V_{(k)}^{\infty }$ for $t_{i}=1$%
, $2\leq i\leq k.$\ \ \ \ \ \ \ \ \ \ \ \ \ \ \ \ \ \ \ \ \ \ \ \ \
\ \ \ \ \ \ \ \ \ \ \ \ \ \ \ \ \ \ \ \ \ \ \ \ \ \ \ \ \ \ \ \ \ \
\ \ \ \ \ \ \ \ \ \ \ \ \ \ \ \ \ \ \ \ \ \ \ \ \ \ \ \ \ \ \ \ \ \
\ \ \ \ \ \ \ \ \ \
\end{cor}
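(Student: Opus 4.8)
The plan is to obtain $V_{n}^{\sim}=A_{1}^{n}$ as a specialization of the polynomial matrix identity already in hand, namely $\widetilde{V_{n}}=(V_{(k)})^{n}$ from (\ref{e2.2}). The guiding principle is that the $k$ sequences of generalized order-$k$ Van der Laan numbers are, by (\ref{vtn}) together with their boundary conditions, exactly the values of the $k$ sequences of generalized Van der Laan polynomials at $t_{s}=1$ for $2\leq s\leq k$; so the corollary should follow by pushing this single substitution through an $n$-fold matrix product.

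First I would identify the two specialized endpoints. The generator matrix $V_{(k)}$ has superdiagonal entries equal to $1$ and bottom row $(t_{k},t_{k-1},\ldots ,t_{2},0)$; substituting $t_{s}=1$ for $2\leq s\leq k$ turns that row into $(1,1,\ldots ,1,0)$ and changes nothing else, which is precisely the matrix $A_{1}$ displayed in the statement. Thus $A_{1}=V_{(k)}|_{t_{s}=1}$. Likewise, each entry $V_{k,m}^{i}(t)$ appearing in $\widetilde{V_{n}}$ is a polynomial in $t_{2},\ldots ,t_{k}$ whose value at $t_{s}=1$ is, by (\ref{vtn}), the number $v_{k,m}^{i}$; comparing (\ref{e2.2}) with (\ref{vn}) entrywise then shows that $\widetilde{V_{n}}$ specializes to $V_{n}^{\sim}$.

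Next I would invoke that evaluation of polynomials at a fixed point is a ring homomorphism, and hence commutes with the finitely many sums and products that form a matrix power. Applying this evaluation to both sides of $\widetilde{V_{n}}=(V_{(k)})^{n}$ yields $V_{n}^{\sim}$ on the left and $(A_{1})^{n}$ on the right, which is the asserted identity. Alternatively, one can argue directly by induction on $n$: the base case $V_{1}^{\sim}=A_{1}$ is read off from the boundary conditions, and the inductive step $V_{n}^{\sim}=A_{1}V_{n-1}^{\sim}$ follows from the recurrence (\ref{vtn}) together with the shift structure of $A_{1}$, mirroring the matrix computation used for $\widetilde{V_{n+1}}=\widetilde{V_{1}}\widetilde{V_{n}}$.

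I anticipate no substantive obstacle; the only care needed is bookkeeping, namely checking that the rows and columns of the polynomial matrix in (\ref{e2.2}) and of the numerical matrix in (\ref{vn}) are indexed identically, so that the entrywise specialization lines up. Once that alignment is confirmed, the corollary is immediate.
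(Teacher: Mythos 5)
Your proposal is correct. Your ``alternative'' argument---induction on $n$ with base case $V_{1}^{\sim}=A_{1}$ and step $V_{n+1}^{\sim}=A_{1}V_{n}^{\sim}$ read off from the recurrence (\ref{vtn})---is word for word the paper's own proof, which consists of exactly that one-line induction. Your primary route, specializing the already-established polynomial identity $\widetilde{V_{n}}=(V_{(k)})^{n}$ at $t_{s}=1$ for $2\leq s\leq k$ via the evaluation homomorphism, is a legitimate and slightly more structural alternative: it buys you the corollary for free from Theorem~\ref{t1}'s setup without redoing any induction, and it makes explicit the point (only implicit in the paper) that the number sequences $v_{k,n}^{i}$ are entrywise specializations of the polynomial sequences $V_{k,n}^{i}(t)$, including the boundary rows. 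The only bookkeeping you flag---that the row/column indexing of (\ref{e2.2}) and (\ref{vn}) agree---does check out, since both matrices list rows $n-k+1,\ldots,n$ and columns $i=1,\ldots,k$ in the same order. Either route is fine; the paper simply chose the shorter one.
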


\begin{proof}
It is clear that $V_{1}^{\sim }=A_{1}$ and $V_{n+1}^{\sim
}=A_{1}V_{n}^{\sim }$ by (\ref{vtn})$.$ So by induction, we have
$V_{n}^{\sim }=A_{1}^{n}$.
\end{proof}

\bigskip

\begin{cor}
\bigskip Let $V_{n}^{\sim }$ be as in (\ref{vn}). Then
\begin{equation*}
\det V_{n}^{\sim }=\left\{
\begin{array}{c}
1\text{ \ \ \ \ \ \ if k is odd,} \\
(-1)^{n}\text{ \ if k is even.}%
\end{array}%
\right.
\end{equation*}
\end{cor}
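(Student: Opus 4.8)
The plan is to reduce the whole computation to the determinant formula already established for $\widetilde{V_n}$ and then read off a parity. First I would invoke the preceding Corollary, which gives $V_n^{\sim}=A_1^n$, and observe that $A_1$ is nothing but the generator matrix $V_{(k)}$ of the $k$ sequences of generalized Van der Laan polynomials specialized at $t_s=1$ for $2\le s\le k$ (the condition $t_1=0$ being already built into $V_{(k)}$). Indeed, the last row of $V_{(k)}$ is $(t_k,t_{k-1},\dots,t_2,0)$, which becomes $(1,1,\dots,1,0)$ under this substitution, while the upper $(k-1)\times(k-1)$ identity block is unaffected. Hence $V_n^{\sim}=(V_{(k)})^n$ evaluated at $t_2=\cdots=t_k=1$, i.e. $V_n^{\sim}=\widetilde{V_n}\big|_{t_s=1}$.

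Next I would apply the earlier Corollary stating $\det\widetilde{V_n}=(-1)^{n(k+1)}t_k^n$, which is itself a direct consequence of Theorem 1.3. Note that $\det A_{(k)}=(-1)^{k+1}t_k$ does not involve $t_1$, so passing from $A_{(k)}$ to $V_{(k)}$ by setting $t_1=0$ leaves the determinant formula intact. Substituting the remaining specialization $t_k=1$ into $\det\widetilde{V_n}=(-1)^{n(k+1)}t_k^n$ then yields at once $\det V_n^{\sim}=(-1)^{n(k+1)}$.

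Finally I would dispose of the two branches by checking the parity of $n(k+1)$. If $k$ is odd, then $k+1$ is even, so $n(k+1)$ is even and $(-1)^{n(k+1)}=1$; if $k$ is even, then $k+1$ is odd, so $n(k+1)\equiv n \pmod 2$ and $(-1)^{n(k+1)}=(-1)^n$. This reproduces exactly the two cases in the statement.

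There is essentially no serious obstacle here; the only point requiring a moment's care is the justification that the determinant identity for $\widetilde{V_n}$ legitimately transfers to the numerical specialization $t_s=1$. This is sound because both sides of $\det\widetilde{V_n}=(-1)^{n(k+1)}t_k^n$ are polynomials in the indeterminates $t_2,\dots,t_k$ and the identity holds identically, so it remains valid after evaluation at any particular values. Once this is granted, the conclusion is merely the trivial parity computation above.
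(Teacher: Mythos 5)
Your proof is correct and follows essentially the same route as the paper: the paper's proof simply cites the formula $\det A_{(k)}^{n}=(-1)^{n(k+1)}t_{k}^{n}$ and specializes, which is precisely what you do, with the parity check of $n(k+1)$ spelled out. The extra care you take about the legitimacy of specializing a polynomial identity at $t_{s}=1$ is sound but left implicit in the paper.
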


\bigskip

\begin{proof}
\bigskip Obvious from (\ref{a1}).
\end{proof}

\bigskip

\begin{cor}
\bigskip Let $v_{k,n}^{i}$ be the $i$-th sequences of k sequences of
generalized order-$k$ Van der Laan Numbers, for $1\leq i\leq k.$
Then, for
all positive integers $n$ and $m$%
\begin{equation*}
v_{k,n+m}^{i}=\sum\limits_{j=1}^{k}v_{k,m}^{j}v_{k,n-k+j}^{i}.
\end{equation*}%
\bigskip
\end{cor}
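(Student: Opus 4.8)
The plan is to reproduce the argument of Theorem~\ref{t1} at the level of numbers, using the matrix identity $V_{n}^{\sim }=A_{1}^{n}$ established in the preceding Corollary in place of the polynomial relation $\widetilde{V_{n}}=(V_{(k)})^{n}$. In fact, since $v_{k,n}^{i}$ is exactly the specialization $t_{s}=1$ ($2\leq s\leq k$) of $V_{k,n}^{i}(t)$, the desired formula is literally the $t_{s}=1$ case of Theorem~\ref{t1}; the matrix route sketched below simply makes this self-contained and matches the style of the paper's other proofs.

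First I would recall that, by the earlier Corollary, $V_{n}^{\sim }=A_{1}^{n}$, where the entries of $V_{n}^{\sim }$ are displayed in (\ref{vn}): its $(a,b)$ entry is $v_{k,n-k+a}^{b}$ for $1\leq a,b\leq k$. Because powers of a fixed matrix commute and satisfy the semigroup law, we obtain, exactly mirroring (\ref{a2}),
\begin{equation*}
V_{n+m}^{\sim }=A_{1}^{n+m}=A_{1}^{m}A_{1}^{n}=V_{m}^{\sim }V_{n}^{\sim }.
\end{equation*}

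Next I would read off the claimed identity from a single entry of this product. The number $v_{k,n+m}^{i}$ sits in the bottom row ($a=k$) and the $i$-th column of $V_{n+m}^{\sim }$. On the right-hand side, the bottom row of $V_{m}^{\sim }$ is $(v_{k,m}^{1},\ldots ,v_{k,m}^{k})$, while the $i$-th column of $V_{n}^{\sim }$ has $j$-th entry $v_{k,n-k+j}^{i}$. Forming the corresponding inner product gives
\begin{equation*}
v_{k,n+m}^{i}=\sum_{j=1}^{k}v_{k,m}^{j}\,v_{k,n-k+j}^{i},
\end{equation*}
as required.

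The computation is entirely routine, so there is no substantive obstacle beyond bookkeeping. The one point that deserves care is the index matching: one must check that the bottom-row entries of $V_{m}^{\sim }$ carry the upper index $j$ (producing $v_{k,m}^{j}$) while the $i$-th column entries of $V_{n}^{\sim }$ carry the shifted lower index $n-k+j$ (producing $v_{k,n-k+j}^{i}$), so that the two families of indices align precisely as in the stated sum. Selecting the factorization $V_{m}^{\sim }V_{n}^{\sim }$ rather than $V_{n}^{\sim }V_{m}^{\sim }$ is exactly what creates the asymmetric roles of $m$ and $n$ in the final formula.
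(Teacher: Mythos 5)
Your proposal is correct and takes essentially the same route as the paper: the paper's proof is simply ``Obvious from Theorem (\ref{t1}),'' i.e.\ the specialization $t_{s}=1$, which you state explicitly, and your self-contained matrix computation $V_{n+m}^{\sim }=A_{1}^{m}A_{1}^{n}=V_{m}^{\sim }V_{n}^{\sim }$ is exactly the argument used to prove Theorem~\ref{t1}, transplanted to the number level with the indices correctly matched.
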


\bigskip

\begin{proof}
Obvious from Theorem (\ref{t1}).
\end{proof}

\bigskip

\bigskip

\begin{cor}
\bigskip Let$\ v_{k,n}^{\text{ }i}$ be the $i$-th sequences of $k$SO$k$V. Then, for
$n>1-k$,%
\begin{equation}
v_{k,n}^{1}=v_{k,n-1}^{\text{ }k}=v_{k,n-2}^{\text{ }k-1}.
\end{equation}
\end{cor}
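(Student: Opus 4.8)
The plan is to obtain this purely numerical identity as the specialization $t_{s}=1$ $(2\le s\le k)$ of the two polynomial shift relations already recorded in Corollary 2.9, because by Definition 3.2 the numbers $v_{k,n}^{i}$ are precisely the polynomials $V_{k,n}^{i}(t)$ evaluated at $t_{s}=1$. Corollary 2.9 furnishes $V_{k,n}^{k}(t)=V_{k,n-1}^{k-1}(t)$ and $V_{k,n}^{1}(t)=t_{k}V_{k,n-1}^{k}(t)$ for $n\ge 1$. Putting every $t_{s}=1$, and in particular $t_{k}=1$, collapses these to $v_{k,n}^{k}=v_{k,n-1}^{k-1}$ and $v_{k,n}^{1}=v_{k,n-1}^{k}$.

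First I would read the equality $v_{k,n}^{1}=v_{k,n-1}^{k}$ straight off the second specialized relation. Next, replacing $n$ by $n-1$ in the first specialized relation gives $v_{k,n-1}^{k}=v_{k,n-2}^{k-1}$, and concatenating the two produces the desired chain $v_{k,n}^{1}=v_{k,n-1}^{k}=v_{k,n-2}^{k-1}$. This reasoning is valid wherever Corollary 2.9 applies to both $n$ and $n-1$, that is for $n\ge 2$.

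The remaining task, and the point where the argument needs care, is the low range $1-k<n\le 1$, which the shifted index $n-1$ pushes outside the hypothesis $n\ge 1$ of Corollary 2.9. Here I would verify the three quantities by hand from the boundary data of Definition 3.2, namely $v_{k,m}^{i}=1$ exactly when $i-m=k$ and $v_{k,m}^{i}=0$ otherwise for $1-k\le m\le 0$, together with the recurrence (\ref{vtn}) for the single value $n=1$. For $n=1$ one finds $v_{k,1}^{1}=1=v_{k,0}^{k}=v_{k,-1}^{k-1}$, while for $2-k\le n\le 0$ each of the three terms vanishes, since none of the pairs $(i,m)=(1,n),(k,n-1),(k-1,n-2)$ lying in the block satisfies $i-m=k$. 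The only genuinely delicate entry is $v_{k,-k}^{k-1}$, arising at the extreme value $n=2-k$, whose index falls just below the block; it equals $0$ under the zero-extension inherited from the two-sided matrix $V_{(k)}^{\infty}$, so equality with the other two vanishing terms persists. I expect this single edge term to be the main obstacle, in the sense that it is the only place where the statement does not reduce mechanically to Corollary 2.9.

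As an alternative that avoids the range bookkeeping almost entirely, one can note that the three sequences $n\mapsto v_{k,n}^{1}$, $n\mapsto v_{k,n-1}^{k}$ and $n\mapsto v_{k,n-2}^{k-1}$ obey one and the same order-$k$ recurrence (\ref{vtn}); hence it suffices to check that they agree on a block of $k$ consecutive seeds, after which the common recurrence propagates the equalities to every admissible $n$. Choosing the seed window so that the index of the third sequence stays within $1-k\le m\le 0$ keeps the seed check entirely inside the boundary data, leaving only the same extreme term $v_{k,-k}^{k-1}$ to be pinned down by the zero-extension.
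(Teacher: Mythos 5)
Your argument is correct, but it is considerably more elaborate than what the paper does: the paper's entire proof is the single sentence that the claim ``is obvious from Definition (3.2), these sequences are equal with index iteration,'' i.e.\ the boundary blocks of the three sequences are shifted copies of one another and the common recurrence propagates the shift. That is essentially your \emph{alternative} argument (same order-$k$ recurrence, agreement on a window of $k$ consecutive seeds). Your primary route --- specializing the two polynomial identities of Corollary 2.9 at $t_{s}=1$ and then patching the low range $2-k\le n\le 1$ by hand --- is a genuinely different derivation that the paper does not use; it buys you a clean reduction for $n\ge 2$ at the cost of the boundary bookkeeping. A real merit of your write-up is that you isolate the one entry, $v_{k,-k}^{\,k-1}$ at $n=2-k$, whose index falls outside the boundary block of Definition 3.2 and must be read off from the two-sided extension $V_{(k)}^{\infty}$ (equivalently, from running the recurrence backwards, which gives $0$ there); the paper's one-line proof silently glosses over exactly this point. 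Both routes are valid; yours is more careful, the paper's is more economical.
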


\begin{proof}
It is obvious from Definition (3.2) that this sequence are equal
with index iteration.
\end{proof}

\bigskip

\begin{lem}
Let$\ v_{k,n}^{\text{ }i}$ be the $i$-th sequences of $k$SO$k$V then%
\begin{equation}
v_{k,n}^{\text{ }i}=v_{k,n}^{\text{ }i-1}+\text{ }v_{k,n-i}^{\text{
}k} \label{a4}
\end{equation}%
for $n>1-k+i$ and $1<i\leq k.$
\end{lem}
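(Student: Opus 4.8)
The plan is to fix $i$ with $1<i\le k$ and to induct on $n$, using the structural fact that for \emph{every} index $i$ the sequence $\{v_{k,n}^{i}\}_{n}$ satisfies one and the same recurrence $v_{k,n}^{i}=\sum_{j=2}^{k}v_{k,n-j}^{i}$ as soon as $n>0$. Because of this, both sides of the asserted identity solve the same linear recurrence on overlapping ranges of $n$, so the proof splits into a routine propagation part (the inductive step) and a finite verification part (a window of $k$ consecutive base indices read off from the boundary conditions).

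For the inductive step, assume the identity for all indices below $n$ and take $n\ge i+1$. Applying the recurrence to the left-hand side and then the induction hypothesis,
\begin{equation*}
v_{k,n}^{i}=\sum_{j=2}^{k}v_{k,n-j}^{i}=\sum_{j=2}^{k}\bigl(v_{k,n-j}^{i-1}+v_{k,n-i-j}^{k}\bigr)=\sum_{j=2}^{k}v_{k,n-j}^{i-1}+\sum_{j=2}^{k}v_{k,n-i-j}^{k}.
\end{equation*}
The first sum equals $v_{k,n}^{i-1}$ by the recurrence for the $(i-1)$-st sequence (legitimate since $n\ge 1$), and the second equals $v_{k,n-i}^{k}$ by the recurrence for the $k$-th sequence applied at the index $n-i$ (legitimate since $n-i\ge 1$). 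Hence $v_{k,n}^{i}=v_{k,n}^{i-1}+v_{k,n-i}^{k}$, closing the step whenever $n\ge i+1$.

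It therefore remains to check the identity on the window $i+1-k\le n\le i$, which carries the equality upward through the step. Here $n-i$ runs through $1-k,\ldots,0$, so the boundary conditions collapse $v_{k,n-i}^{k}$ to $1$ when $n=i$ and to $0$ otherwise; the claim thus reduces to $v_{k,n}^{i}=v_{k,n}^{i-1}$ for $i+1-k\le n\le i-1$ together with $v_{k,i}^{i}-v_{k,i}^{i-1}=1$. For the indices $n\le 0$ this is immediate, since the lone unit entries of columns $i$ and $i-1$ sit at $n=i-k$ and $n=i-1-k$, both below the window, so both columns vanish there. The main obstacle is the range $1\le n\le i-1$: one must follow how these two unit boundary entries propagate forward under $\sum_{j=2}^{k}v_{k,n-j}^{i}$ and verify that in each such sum the contributions of columns $i$ and $i-1$ cancel, so that $v_{k,n}^{i}-v_{k,n}^{i-1}$ stays $0$ until it jumps to $1$ at $n=i$. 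This boundary bookkeeping is the delicate heart of the argument and is most sensitive at the edge $i=k$, where the unit entry of column $k$ coincides with $n=0$; that extremal case should be treated separately and with care.
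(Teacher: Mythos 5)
Your overall strategy --- both sides satisfy the same order-$k$ recurrence, so the identity propagates once it is checked on a window of $k$ consecutive indices --- is essentially the paper's own approach (the paper phrases it as showing that the difference $t_n=v_{k,n}^{i}-v_{k,n}^{i-1}$ has the same initial data as $v_{k,n-i}^{k}$). Your inductive step is fine. But the proof is not complete: the verification on the range $1\le n\le i$, which you yourself call ``the delicate heart of the argument,'' is only described, not carried out. Announcing that ``one must follow how these two unit boundary entries propagate forward and verify that the contributions cancel'' is precisely the content of the lemma on that range; without that computation nothing has been proved. (For the record, the computation does go through when $1<i\le k-1$: with $t_m=0$ for $1\le m\le n-2$ by induction, one finds $t_n=\bigl[\,i-k\in[n-k,0]\,\bigr]-\bigl[\,i-k-1\in[n-k,0]\,\bigr]$, which equals $0$ for $n\le i-1$ and $1$ for $n=i$, as required.)

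The more serious problem is that the case you flag as needing ``separate and careful'' treatment, $i=k$, cannot be repaired: the lemma as stated is false there, so no bookkeeping will close your gap. Take $k=3$, $i=3$, $n=3$ (which satisfies $n>1-k+i=1$). From the boundary conditions and $v_{3,n}^{i}=v_{3,n-2}^{i}+v_{3,n-3}^{i}$ one gets $v_{3,3}^{3}=1$, $v_{3,3}^{2}=1$ and $v_{3,0}^{3}=1$, so the claimed identity reads $1=1+1$. The source of the failure is exactly the collision you noticed: for $i=k$ the unit entry of column $k$ sits at $n=0$, so $v_{k,1}^{k}-v_{k,1}^{k-1}=0-1=-1$ is already nonzero at $n=1$, and this $-1$ contaminates the later terms. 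The statement is correct only for $1<i\le k-1$. The paper's own proof overlooks this as well --- its table of initial conditions for $t_n$ on $[1-k,0]$ does not actually coincide, after the index shift, with the initial conditions of $v_{k,n-i}^{k}$ on a common window when $i=k$ --- and Theorem 3.11, which is derived from this lemma, inherits the same failure at $i=k$.
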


\bigskip

\begin{proof}
\bigskip \bigskip Assume for\ $n>1-k+i,$ $v_{k,n}^{\text{ }i}-v_{k,n}^{\text{
}i-1}=t_{n}$ and show $t_{n}=$ $v_{k,n-i}^{\text{ }k}.$

First we obtain initial conditions for $t_{n}$ by using initial
conditions
of \ $i$-th and $(i-1)$-th sequences of $k$SO$k$V simultaneously as follows;%
\begin{equation*}
\begin{tabular}{|c|c|c|c|}
\hline
$n\setminus i$ & $v_{k,n}^{\text{ }i}$ & $v_{k,n}^{\text{ }i-1}$ & $%
t_{n}=v_{k,n}^{\text{ }i}-v_{k,n}^{\text{ }i-1}$ \\ \hline $1-k$ &
$0$ & $0$ & $0$ \\ \hline $2-k$ & $0$ & $0$ & $0$ \\ \hline $\vdots
$ & $\vdots $ & $\vdots $ & $\vdots $ \\ \hline $i-k-2$ & $0$ & $0$
& $0$ \\ \hline $i-k-1$ & $0$ & $1$ & $-1$ \\ \hline $i-k$ & $1$ &
$0$ & $1$ \\ \hline $i-k+1$ & $0$ & $0$ & $0$ \\ \hline $\vdots $ &
$\vdots $ & $\vdots $ & $\vdots $ \\ \hline
$0$ & $0$ & $0$ & $0$%
\end{tabular}%
\end{equation*}%
Since initial conditions of $t_{n}$ are equal to initial condition of $%
v_{k,n}^{\text{ }k}$ with index iteration. Then we have%
\begin{equation*}
t_{n}=v_{k,n-i}^{\text{ }k}.
\end{equation*}
\end{proof}

\bigskip

\bigskip We give the following Theorem by using generalization of MacHenry
in [10].

\begin{thrm}
\bigskip Let$\ v_{k,n}^{\text{ }i}$ be the $i$-th sequences of $k$SO$k$V$,$
then for $n\geq 1$ and $1\leq i\leq k$%
\begin{equation*}
v_{k,n}^{\text{ }i}=v_{k,n-1}^{\text{ }k}+v_{k,n-2}^{\text{
}k}+\cdots +v_{k,n-i}^{\text{
}k}=\sum\limits_{m=1}^{i}v_{k,n-m}^{k}.
\end{equation*}
\end{thrm}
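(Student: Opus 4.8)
The plan is to fix $n$ and induct on the sequence index $i$, using the two results that immediately precede this theorem as the engine of the argument. The base case $i=1$ is exactly the preceding Corollary, which asserts $v_{k,n}^{1}=v_{k,n-1}^{k}$; since $\sum_{m=1}^{1}v_{k,n-m}^{k}=v_{k,n-1}^{k}$, the claimed identity holds verbatim when $i=1$, with no computation required. This is the anchor from which the single Van der Laan column $v_{k,n}^{k}$ will be shown to generate every column $v_{k,n}^{i}$.

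For the inductive step I would assume the formula for index $i-1$, namely $v_{k,n}^{i-1}=\sum_{m=1}^{i-1}v_{k,n-m}^{k}$, and then invoke Lemma (\ref{a4}), which gives $v_{k,n}^{i}=v_{k,n}^{i-1}+v_{k,n-i}^{k}$. Substituting the inductive hypothesis into the first summand and observing that the extra term $v_{k,n-i}^{k}$ is precisely the missing $m=i$ contribution closes the sum, so that the telescoping
\[
v_{k,n}^{i}=v_{k,n}^{1}+\sum_{m=2}^{i}v_{k,n-m}^{k}=v_{k,n-1}^{k}+\sum_{m=2}^{i}v_{k,n-m}^{k}=\sum_{m=1}^{i}v_{k,n-m}^{k}
\]
identifies $v_{k,n}^{i}$ with the desired partial sum of shifted copies of the last column. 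Thus the identity propagates from $i-1$ to $i$ and the induction runs through $1\leq i\leq k$.

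The only delicate point, and the place where I would concentrate the care, is the bookkeeping of the admissible index ranges rather than any algebra, since the algebra above is a one-line telescoping. Lemma (\ref{a4}) is stated only for $n>1-k+i$ and $1<i\leq k$, so before each application I must confirm that the theorem's hypothesis $n\geq 1$ keeps every invocation inside the lemma's range as $i$ grows: this is automatic for $i<k$, because then $1-k+i<1\leq n$, but the borderline index $i=k$, where $1-k+i=1$, forces $n>1$ and therefore has to be reconciled with the stated bound $n\geq1$ by checking the small case directly against the defining boundary conditions of the $k$SO$k$V. Once these index inequalities are verified (and the borderline $i=k$ handled from the boundary data), no estimates remain and the proof is complete.
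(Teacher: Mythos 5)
Your strategy is essentially the paper's: both proofs telescope Lemma (\ref{a4}). The only difference is direction. You anchor at $i=1$ via $v_{k,n}^{1}=v_{k,n-1}^{k}$ and add the increments $v_{k,n}^{i}-v_{k,n}^{i-1}=v_{k,n-i}^{k}$ going up to $i$; the paper sums the same increments from $i$ up to $k-1$, obtains $v_{k,n}^{k-1}-v_{k,n}^{i}$, and then converts $v_{k,n}^{k-1}=v_{k,n+1}^{k}$ into a sum using the defining recurrence before subtracting. Your version is leaner, since it needs neither the identity $v_{k,n}^{k-1}=v_{k,n+1}^{k}$ nor the recurrence for $v_{k,n}^{k}$. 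For $1\leq i\leq k-1$ your induction is valid and every invocation of the Lemma is inside its stated range, exactly as you observe.

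The place you call ``the only delicate point'' is, however, worse than delicate: the case $i=k$ cannot be rescued by checking boundary data, because the identity is simply false there. For $i=k$ the claimed formula reads $v_{k,n}^{k}=\sum_{m=1}^{k}v_{k,n-m}^{k}$, while the defining recurrence gives $v_{k,n}^{k}=\sum_{m=2}^{k}v_{k,n-m}^{k}$; together these would force $v_{k,n-1}^{k}=0$ for all $n\geq1$, which fails already at $n=1$ (where $v_{k,0}^{k}=1$) and for all large $n$. Concretely, for $k=3$ one has $v_{3,5}^{3}=2$ but $v_{3,4}^{3}+v_{3,3}^{3}+v_{3,2}^{3}=1+1+1=3$. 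Lemma (\ref{a4}) itself also fails at $i=k$ (e.g.\ $v_{3,5}^{3}-v_{3,5}^{2}=0\neq v_{3,2}^{3}=1$); the correct relation there is $v_{k,n}^{k}=v_{k,n-1}^{k-1}$, which yields $v_{k,n}^{k}=\sum_{m=2}^{k}v_{k,n-m}^{k}$ rather than a sum starting at $m=1$. So the theorem should be restricted to $1\leq i\leq k-1$. This defect is inherited from the paper --- its own proof uses the Lemma at $i=k$ in the same illegitimate way --- so it is not a flaw peculiar to your argument; but your plan to ``handle the borderline $i=k$ from the boundary data'' would not have closed the proof, and you should replace it with an explicit exclusion (or correction) of that case.
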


\bigskip

\begin{proof}
Writting equalty (\ref{a4}) recursively we have
\begin{eqnarray*}
v_{k,n}^{i+1}-v_{k,n}^{i} &=&v_{k,n-i-1}^{k} \\
v_{k,n}^{i+2}-v_{k,n}^{i+1} &=&v_{k,n-i-2}^{k} \\
&&\vdots  \\
v_{k,n}^{k-2}-v_{k,n}^{k-3} &=&v_{k,n-k+2}^{k} \\
v_{k,n}^{k-1}-v_{k,n}^{k-2} &=&v_{k,n-k+1}^{k}
\end{eqnarray*}%
and adding these equations side by side we obtain
\begin{equation*}
v_{k,n}^{k-1}-v_{k,n}^{i}=v_{k,n-k+1}^{k}+v_{k,n-k+2}^{k}+\cdots
+v_{k,n-i-2}^{k}+v_{k,n-i-1}^{k}.
\end{equation*}%
And using the equation $v_{k,n}^{k-1}=v_{k,n+1}^{k}$ and Definition
(3.4) we obtain
\begin{eqnarray*}
v_{k,n}^{i} &=&v_{k,n+1}^{k}-(v_{k,n-k+1}^{k}+v_{k,n-k+2}^{k}+\cdots
+v_{k,n-i-2}^{k}+v_{k,n-i-1}^{k}) \\
&=&v_{k,n-1}^{k}+v_{k,n-2}^{k}+\cdots +v_{k,n-k+1}^{k} \\
&&-(v_{k,n-k+1}^{k}+v_{k,n-k+2}^{k}+\cdots
+v_{k,n-i-2}^{k}+v_{k,n-i-1}^{k})
\\
&=&v_{k,n-1}^{k}+v_{k,n-2}^{k}+\cdots +v_{k,n-i}^{k}.
\end{eqnarray*}
\end{proof}

\bigskip \bigskip

\bigskip Now we initiate to the generalized Perrin numbers.

\begin{definition}
\bigskip For $t_{s}=1,$ $2\leq s\leq k$, the generalized Perrin polynomial $%
R_{k,n}(t)$ and matrix $R_{(k)}^{\infty }$ together are reduced to%
\begin{equation}
r_{k,n}=\sum\limits_{j=2}^{k}r_{k,n-j}
\end{equation}%
with boundary conditions%
\begin{equation*}
r_{k,1-k}=(k-2),\text{ }r_{k,2-k}=\ldots =r_{k,-2}=\text{
}r_{k,-1}=-1\ \text{and }r_{k,0}=k,
\end{equation*}%
which is called generalized order-$k$ Perrin numbers(GO$k$R).
\end{definition}

When $k=3$, it is reduced to ordinary Perrin numbers; $%
(1,(-1),3,0,2,3,2,5,5,7,\ldots )$ with iterating index by two.

\bigskip

We rewrite matrix (\ref{rk}) for $t_{s}=1$, $2\leq s\leq k$ we obtain%
\begin{equation*}
R_{(k1)}=[a_{n,i}]_{k\times k}=\left[
\begin{array}{c}
((-1),\text{ }(-2),\ldots ,(k-2),0,k).(A_{1})^{-(k-1)} \\
((-1),\text{ }(-2),\ldots ,(k-2),0,k).(A_{1})^{-(k-2)} \\
\vdots  \\
((-1),\text{ }(-2),\ldots ,(k-2),0,k).(A_{1})^{-1} \\
((-1),\text{ }(-2),\ldots ,(k-2),0,k)%
\end{array}%
\right] .
\end{equation*}

\begin{definition}
For $t_{s}=1$, $2\leq s\leq k,$ $R_{k,n}^{i}(t)$ can be written explicitly as%
\begin{equation*}
r_{k,n}^{i}=\sum\limits_{j=2}^{k}r_{k,n-j}^{i}
\end{equation*}%
for $n>0$ and $1\leq i\leq k,$\ with boundary conditions%
\begin{equation*}
r_{k,n}^{i}=[a_{k+n,i}]_{k\times k}=R_{(k1)}
\end{equation*}%
for $1-k\leq n\leq 0,$ where $r_{k,n}^{i}$ is the $n$-th term of
$i$-th sequence. This generalization is called $k$ sequences of the
generalized order-$k$ Perrin numbers($k$SO$k$R).
\end{definition}

\bigskip When $i=k=3,$ we obtain ordinary Perrin numbers and for any
integer $k\geq 3,$ $r_{k,n}^{k}=r_{k,n}$.

\begin{cor}
Let $r_{k,n}^{i}$ and $v_{k,n}^{i}$ be $k$ sequences of generalized
Perrin and Van der Laan numbers respectively, then
\begin{equation*}
r_{k,n}^{i}=kv_{k,n}^{i}-(v_{k,n-k+1}^{i}+\ldots
+(k-2)v_{k,n-2}^{i}).
\end{equation*}
\end{cor}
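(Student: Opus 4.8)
The plan is to derive this identity as a direct specialization of Theorem 2.12, which already records the corresponding statement at the level of polynomials. Recall that Theorem 2.12 asserts
\[
R_{k,n}^{i}(t)=(-t_{k-1})V_{k,n-k+1}^{i}(t)+\cdots +(-t_{2}(k-2))V_{k,n-2}^{i}(t)+kV_{k,n}^{i}(t),
\]
the suppressed middle terms being $-j\,t_{k-j}V_{k,n-k+j}^{i}(t)$ for $1\leq j\leq k-2$. By Definitions 3.2 and 3.11 the order-$k$ numbers $v_{k,n}^{i}$ and $r_{k,n}^{i}$ arise from the polynomials $V_{k,n}^{i}(t)$ and $R_{k,n}^{i}(t)$ precisely through the substitution $t_{s}=1$ for $2\leq s\leq k$. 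Hence the entire strategy is to impose this substitution on the identity of Theorem 2.12.

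Carrying this out, I would set $t_{s}=1$ for every $2\leq s\leq k$. Then $R_{k,n}^{i}(t)\mapsto r_{k,n}^{i}$ and $V_{k,n}^{i}(t)\mapsto v_{k,n}^{i}$, the leading coefficient $k$ is untouched, and each middle coefficient $-j\,t_{k-j}$ collapses to $-j$; the position carrying $-t_{1}(k-1)$ stays zero since $t_{1}=0$ throughout the Van der Laan/Perrin setting. The polynomial identity therefore becomes
\[
r_{k,n}^{i}=-v_{k,n-k+1}^{i}-2v_{k,n-k+2}^{i}-\cdots -(k-2)v_{k,n-2}^{i}+kv_{k,n}^{i}.
\]
Transposing the leading term and factoring the common minus sign out of the remaining terms then yields
\[
r_{k,n}^{i}=kv_{k,n}^{i}-\bigl(v_{k,n-k+1}^{i}+\cdots +(k-2)v_{k,n-2}^{i}\bigr),
\]
which is exactly the asserted equality.

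The only delicate point is the bookkeeping concealed in the ellipsis: one must confirm that the coefficient multiplying $v_{k,n-k+j}^{i}$ is exactly $j$, so that the two displayed endpoints --- coefficient $1$ on $v_{k,n-k+1}^{i}$ and coefficient $k-2$ on $v_{k,n-2}^{i}$ --- correctly bound the arithmetic progression $1,2,\ldots ,k-2$. This is transparent from the shape of the derivative vector $(-t_{k-1},\ldots ,-t_{2}(k-2),0,k)$ of the core polynomial, which after the substitution $t_{s}=1$ reads $(-1,-2,\ldots ,-(k-2),0,k)$. Apart from this indexing check there is no genuine obstacle, since all the structural content has already been established in Theorem 2.12 and only the evaluation $t_{s}=1$ remains.
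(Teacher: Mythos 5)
Your proposal is correct and matches the paper's intent: the paper states this corollary without proof, clearly as the immediate specialization $t_{s}=1$ ($2\leq s\leq k$) of Theorem 2.12, which is exactly what you carry out. Your indexing check of the middle coefficients against the derivative vector $(-t_{k-1},\ldots ,-(k-2)t_{2},0,k)$ is accurate and is the only point requiring care.
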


\bigskip

\begin{cor}
Let $r_{k,n}^{i}$ and $v_{k,n}^{i}$ be $k$ sequences of generalized
Perrin
and Van der Laan numbers respectively, then%
\begin{equation*}
r_{k,n}^{i}(t)=kv_{k,n-k}^{i}+\cdots +3v_{k,n-3}^{i}+2v_{k,n-2}^{i}.
\end{equation*}
\end{cor}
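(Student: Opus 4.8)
The plan is to read off this corollary as the $t_{s}=1$ specialization of the polynomial identity established just before Section~3 in Theorem~2.16, namely
\[
R_{k,n}^{i}(t)=kt_{k}V_{k,n-k}^{i}(t)+\cdots +3t_{3}V_{k,n-3}^{i}(t)+2t_{2}V_{k,n-2}^{i}(t),
\]
in exactly the same spirit in which Corollary~3.12 is obtained from Theorem~2.12. No new combinatorics is required; the whole argument is a substitution of values into an identity that has already been proved.

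First I would recall the dictionary between the polynomials and the numbers. By Definition~3.11 the generalized order-$k$ Perrin numbers $r_{k,n}^{i}$ are precisely the values of the polynomials $R_{k,n}^{i}(t)$ under $t_{s}=1$ for $2\leq s\leq k$ (with $t_{1}=0$ already imposed throughout Section~2), and by Definition~3.2 the numbers $v_{k,n}^{i}$ are the values of $V_{k,n}^{i}(t)$ under the same substitution. Both definitions were set up so that the shared recurrence and the boundary data of each polynomial sequence specialize consistently to those of the corresponding number sequence; hence evaluation at $t_{s}=1$ is compatible with any polynomial identity in which these sequences appear.

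Next I would substitute $t_{s}=1$ for every $2\leq s\leq k$ into the displayed identity of Theorem~2.16. On the left $R_{k,n}^{i}(t)$ becomes $r_{k,n}^{i}$; on the right each coefficient $jt_{j}$ collapses to $j$ and each polynomial $V_{k,n-j}^{i}(t)$ collapses to $v_{k,n-j}^{i}$. This yields
\[
r_{k,n}^{i}=kv_{k,n-k}^{i}+\cdots +3v_{k,n-3}^{i}+2v_{k,n-2}^{i},
\]
which is exactly the assertion of the corollary.

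The argument has essentially no analytic obstacle; the only point that deserves explicit attention is the legitimacy of the substitution itself. Concretely, I would verify that the boundary values of $r_{k,n}^{i}$ recorded in Definition~3.11 through the matrix $R_{(k1)}$, and those of $v_{k,n}^{i}$ in Definition~3.2, are indeed the $t_{s}=1$ reductions of the polynomial boundary values underlying Theorem~2.16. Since the definitions in Section~3 are constructed as precisely these reductions, the substitution is valid for all admissible $n$ and all $1\leq i\leq k$, and the corollary follows immediately.
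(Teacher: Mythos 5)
Your proposal is correct and matches the paper's intent exactly: the paper states this corollary without proof, treating it as the immediate $t_{s}=1$ specialization of the identity $R_{k,n}^{i}(t)=kt_{k}V_{k,n-k}^{i}(t)+\cdots +2t_{2}V_{k,n-2}^{i}(t)$ from the last theorem of Section~2, which is precisely the substitution argument you give. Your added remark checking that the boundary data of $r_{k,n}^{i}$ and $v_{k,n}^{i}$ really are the $t_{s}=1$ reductions of the polynomial boundary data is a sensible bit of diligence the paper leaves implicit.
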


\bigskip

\begin{conclusion}
\bigskip There are a lot of studies on Fibonacci and Lucas numbers and on their
generalizations. In this paper we showed that these studies can be
transferred to the Van der Laan and Perrin numbers. Since our
definition of these number are polynomial based, it has great amount
of application area.
\end{conclusion}

\bigskip

\bigskip


\begin{thebibliography}{00}
\bibitem{[1]} M. C. Er, Sums of Fibonacci Numbers by Matrix Method,
Fibonacci Quarterly, vol.23, no. 3, 204-207, (1984).

\bibitem{[2]} D. Kalman, Generalized Fibonacci Numbers by Matrix
Method, Fibonacci Quarterly, vol. 20, no. 1, 73-76, (1982).

\bibitem{[3]} E. Karaduman, An Application of Fibonacci Numbers in Matrices.
\textit{Applied Mathematics and Computation}. 147(2004) 903-908.

\bibitem{[4]} K. Kaygisiz and A. \c{S}ahin, Generalized Lucas Numbers and Relations with Generalized Fibonacci
Numbers. Submitted.

\bibitem{[5]}K. Kaygýsýz, D. Bozkurt, k-generalized order-k Perrin number
representation by matrix method, Ars Combinatoria, to appear.

\bibitem{[6]} E. K\i l\i \c{c} and D. Ta\c{s}ci, On the Generalized Order-$k$
Fibonacci and Lucas Numbers. \textit{Rocky Mount. Jour. of Math}.
36(2006), 1915-1925.

\bibitem{[7]} E. K\i l\i \c{c} and D. Ta\c{s}ci, The Generalized Binet
Formula, Representation and Sums of The Generalized Order-$k$ Pell
Numbers. \textit{Taiwanese Jour. of Math}. 10(2006), no. 6,
1661-1670.

\bibitem{[8]} T. MacHenry, A Subgroup of The Group of Units in The Ring of
 Arithmetic Fonctions. \textit{Rocky Mount. Jour. of Math}.39(1999), no. 3, 1055-1065.

\bibitem{[9]} T. MacHenry, Generalized Fibonacci and Lucas Polynomials and
Multiplicative Arithmetic Functions. \textit{Fibonacci Quarterly}.
38(2000), 17-24.

\bibitem{[10]} T. MacHenry and K. Wong, Degree $k$ Linear Recursions mod($p$)
and Number Fields. \textit{Rocky Mount. Jour. of Math}. \textit{to
appear}.

\bibitem{[11]} E.P. Miles, Generalized Fibonacci Numbers and Associated
Matrices, Amer. Math. Monthly 67 (1960), 745-752.

\bibitem{[12]} A.G. Shannon, P.G. Anderson and A.F. Horadam, Properties of Cordonnier,
 Perrin and Van der Laan numbers. \textit{International Journal of Mathematical Education in Science and Techonology}.
 37(2006), no. 7, 825-831.

\bibitem{[13]} D. Ta\c{s}ci and E. K\i l\i \c{c}, On the Order-$k$
Generalized Lucas Numbers. \textit{Appl. Math. Comput.}. 155(2004),
no. 3, 637-641.

\bibitem{[14]} F. Yilmaz and D. Bozkurt, Hessenberg matrices and the Pell and Perrin numbers.
 \textit{Journal of Number Theory}. 131(2011),
1390-1396.


\end{thebibliography}
\end{document}